\documentclass[a4paper,10pt]{article}
\usepackage[utf8x]{inputenc}
\usepackage{amsthm,amssymb, amsmath}
\usepackage{graphicx}

\newtheorem{theorem}{Theorem}

\newtheorem{lemma}[theorem]{Lemma}
\newtheorem{proposition}[theorem]{Proposition}

\newtheorem{definition}[theorem]{Definition}
\newtheorem{remark}[theorem]{Remark}

\title{Sublinear deviation between geodesics and sample paths}
\author{Giulio Tiozzo\thanks{Department of Mathematics, Harvard University.
email: $<$tiozzo@math.harvard.edu$>$.}
}

\begin{document}
\maketitle

\begin{abstract}
We give a proof of the sublinear tracking property for sample 
paths of random walks on various groups acting on spaces with hyperbolic-like properties.
As an application, we prove sublinear tracking in Teichm\"uller distance for random walks on mapping class
groups, and on Cayley graphs of a large class of finitely generated groups. 
\end{abstract}

\section{Introduction}

In probability, the classical law of large numbers says the following: suppose $X_n$ is a sequence of 
independent, identically distributed real valued random variables, and suppose they have 
finite expectation: $\mathbb{E}[X_n] = \ell < \infty$. Then their average converges almost surely to the expectation:
\begin{equation} \label{lln}
\frac{X_1 + \dots + X_n}{n} \to \ell. 
\end{equation}

We can think of the stochastic process $Y_n := X_1 + \dots + X_n$ as a random walk on the group $\mathbb{R}$ acting by translations on the real line: 
we are starting at $x = 0$ and every time we are adding a random element $X_i$. 
In this way, the law of large numbers is equivalent to saying that almost every sample path
can be approximated by the unit speed geodesic $\gamma(t) := t$ up to an error 
which is sublinear in the number of steps: in fact, \eqref{lln} can be rewritten as
\begin{equation} \label{rtrack}
\frac{|X_1 + \dots + X_n - \gamma(\ell n) |}{n} \to 0.  
\end{equation}

As noted by Kaimanovich \cite{Ka87}, the latter formulation lends itself to a natural generalization to non-abelian groups. 
Indeed, let $G$ be a group acting isometrically on a geodesic metric space $(X, d)$, and $\mu$ a probability 
distribution on $G$.
A random walk on $G$ is defined by drawing each time independently an element $g_n$ from $G$ 
with distribution $\mu$, and considering the product 
$$w_n := g_1 \dots g_n. $$
If we fix a basepoint $x$ in $X$, the sequence $(w_n x)_{n \in \mathbb{N}}$ is a stochastic process with values in $X$, 
so we can think  of it as a random walk on $X$. In this setting, the natural question arises as a
 generalization of \eqref{rtrack} whether almost every sample path $(w_n x)$ can be approximated by some geodesic ray
$\gamma : [0, \infty) \to X$, with sublinear error:
$$\lim_{n \to \infty} \frac{d(w_n x, \gamma)}{n} = 0.$$
If such a property holds, we will say that the random walk has the \emph{sublinear tracking} or \emph{geodesic ray approximation} property.
The appropriate equivalent to finite expectation for non-abelian group actions is the \emph{finite first moment} 
condition, i.e.
$$\int_G d(x, gx) \ d\mu(g) < \infty.$$

Of particular interest in geometry and topology is the action of the \emph{mapping class group} $G = Mod(S)$ of a 
compact, orientable surface $S$ of genus $g \geq 1$ on the \emph{Teichm\"uller space} $X = \mathcal{T}(S)$, 
equipped with the \emph{Teichm\"uller metric} $d_T$. 
The first goal of this paper is to establish sublinear tracking in Teichm\"uller metric for random walks with finite 
first moment on mapping class groups:

\begin{theorem}
Let $\mu$ be a probability measure on $Mod(S)$ with finite first moment, whose support generates a non-elementary group, 
and fix some $x$ in the Teichm\"uller space $\mathcal{T}(S)$.
Then, there exists $A > 0$ such that for almost all sample paths there exists a 
Teichm\"uller geodesic ray 
$\gamma : [0, \infty) \to \mathcal{T}(S)$ with $\gamma(0) = x$ and such that 
$$\lim_{n \to \infty} \frac{d_T(w_n x, \gamma(An))}{n} = 0.$$ 
\end{theorem}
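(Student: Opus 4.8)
\emph{Proof plan.} The natural approach is to isolate an abstract sublinear-tracking criterion for an isometric action $G \curvearrowright (X,d)$ --- finite first moment of $\mu$, positivity of the drift, almost sure convergence of $(w_n x)$ to a boundary point admitting a geodesic ray from $x$, and a contraction/recurrence condition encoding the ``hyperbolic-like'' behaviour of typical rays --- and then to verify these hypotheses for the action of $Mod(S)$ on $(\mathcal{T}(S), d_T)$. (In spirit this is the horofunction approach to laws of large numbers: one could equally phrase the criterion via the horofunction compactification of $d_T$ and a $\mu$-stationary measure on it, with a Busemann cocycle realizing the drift.) I now describe the verification.

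\textbf{The two ingredients.} First, apply Kingman's subadditive ergodic theorem to $a_n(\omega) := d_T(x, w_n x)$: since $Mod(S)$ acts by isometries and the $g_i$ are i.i.d., $a_{n+m}(\omega) \le a_n(\omega) + d_T(x, g_{n+1}\cdots g_{n+m} x)$ with the last term distributed like $a_m$ after shifting, and with $\int_G d_T(x,gx)\,d\mu(g) < \infty$ this gives that $\ell := \lim_n a_n(\omega)/n$ exists almost surely and is a.s. constant and finite; positivity $\ell>0$ follows from non-elementarity, since the coarsely Lipschitz projection $\pi$ to the Gromov hyperbolic curve complex $\mathcal{C}(S)$ satisfies $d_{\mathcal{C}}(\pi x,\pi w_n x) \to \infty$ linearly while $d_T(x,w_n x) \ge c\, d_{\mathcal{C}}(\pi x,\pi w_n x) - c'$. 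Put $A := \ell$. Second, by the Kaimanovich--Masur theorem almost every $(w_n x)$ converges in the Thurston compactification $\mathcal{T}(S)\cup\mathrm{PMF}$ to a foliation $\lambda_\omega$ which is a.s. minimal, filling and \emph{uniquely ergodic}; by Masur's theorem such a foliation is the vertical foliation of a \emph{unique} Teichm\"uller geodesic ray $\gamma_\omega$ with $\gamma_\omega(0)=x$, the finite segments $[x,w_n x]$ converge to $\gamma_\omega$ uniformly on compacta, and $\omega\mapsto\gamma_\omega$ is measurable. This $\gamma_\omega$ is the ray claimed in the statement.

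\textbf{Sublinear deviation.} It remains to show $d_T\big(w_n x,\gamma_\omega(\ell n)\big) = o(n)$ almost surely. Letting $\gamma_\omega(t_n)$ be a nearest point of $\gamma_\omega$ to $w_n x$, the unit-speed parametrization gives $d_T(w_n x,\gamma_\omega(\ell n)) \le d_T(w_n x,\gamma_\omega(t_n)) + |t_n - \ell n|$, while the triangle inequality in both directions together with $d_T(x,w_n x) = \ell n + o(n)$ gives $|t_n - \ell n| \le d_T(w_n x,\gamma_\omega) + o(n)$; everything therefore reduces to the estimate $d_T(w_n x,\gamma_\omega) = o(n)$ a.s. Here unique ergodicity of $\lambda_\omega$ enters crucially: by Masur's criterion it forces $\gamma_\omega$ to be recurrent to the $\epsilon_0$-thick part of $\mathcal{T}(S)$ for some $\epsilon_0>0$, and along its thick stretches $\gamma_\omega$ behaves hyperbolically --- by Minsky's product-regions theorem such geodesics are contracting with coarsely Lipschitz nearest-point projection. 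One then runs the standard hyperbolic fellow-travelling argument for the ideal triangle with vertices $x$, $w_n x$ and $\lambda_\omega = \gamma_\omega(\infty)$: since $w_n x \to \lambda_\omega$ and $[x,w_n x]\to\gamma_\omega$ on compacta, the segment $[x,w_n x]$ lies in a sublinear-in-$n$ neighbourhood of $\gamma_\omega$ outside a terminal piece of length $o(n)$. Concretely one argues in $\mathcal{C}(S)$, where $\pi(w_n x)\to[\lambda_\omega]\in\partial\mathcal{C}(S)$ with linearly growing displacement, so $\pi(w_n x)$ stays boundedly close to the quasigeodesic $\pi(\gamma_\omega)$ with along-coordinate tending to infinity; Rafi's combinatorial formula for $d_T$ --- a sum over subsurface projections, dominated along a thick geodesic by the curve-complex gap --- then converts this into the desired Teichm\"uller bound.

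\textbf{The main obstacle.} The hard part is precisely this last step, and the general criterion behind it. The difficulty is that $(\mathcal{T}(S),d_T)$ is \emph{not} Gromov hyperbolic: it contains quasi-isometrically embedded flats coming from Minsky's product regions over the thin part, so blunt geodesic tracking is false. What makes the proof go through is that unique ergodicity of the limiting foliation holds on a full-measure event and forces the relevant ray into the thick part, where hyperbolic-like contraction is available; the genuine work is to make ``fellow-travel with sublinear error along the \emph{entire} segment'' quantitative and uniform in $n$, combining Rafi's description of Teichm\"uller distance with the linear growth of the curve-complex displacement. By comparison, the measurability of $\omega\mapsto\gamma_\omega$ and the interaction of the left cocycle $w_n=g_1\cdots g_n$ with the ergodic shift are routine bookkeeping.
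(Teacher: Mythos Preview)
Your plan diverges sharply from the paper's, and the crucial step --- showing $d_T(w_n x,\gamma_\omega)=o(n)$ almost surely --- has a real gap. The paper does not attack this estimate geometrically at all. Instead it passes to the space $G^{\mathbb{Z}}$ of bi-infinite sequences with the shift $\sigma$: almost every such path has a forward limit $\xi\in\mathcal{UE}$ and a backward limit $\eta\in\mathcal{UE}$ (Kaimanovich--Masur applied to $\mu$ and to $\check\mu$), and since $\eta\neq\xi$ almost surely the pair determines a unique Teichm\"uller geodesic \emph{line} $\gamma=[\eta,\xi]$. Setting $f(\underline{g}):=d_T(x,\gamma)$, the $G$-equivariance of $(\eta,\xi)\mapsto[\eta,\xi]$ together with the shift relation for the boundary maps gives $f(\sigma^n\underline{g})=d_T(w_n x,\gamma)$, while the triangle inequality gives $|f(\sigma\underline{g})-f(\underline{g})|\le d_T(x,g_1 x)$, which is integrable by the moment hypothesis. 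An elementary lemma (if $f\ge0$ is measurable and $f\circ T-f\in L^1$ for an ergodic $T$, then $f(T^n\omega)/n\to 0$ a.s.) then delivers $d_T(w_n x,\gamma)/n\to 0$ with no further input from Teichm\"uller geometry. Only afterwards is the line replaced by a ray through $x$, via Masur's bounded-Hausdorff-distance result for rays sharing a uniquely ergodic vertical foliation; positivity of $A$ comes from the entropy criterion and exponentially bounded growth, not from the curve complex.

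Your route through Minsky contraction, curve-complex projection, and Rafi's distance formula is exactly the hard geometric path this argument is designed to bypass. The specific gap: closeness of $\pi(w_n x)$ to $\pi(\gamma_\omega)$ in $\mathcal{C}(S)$ does not yield sublinear Teichm\"uller distance, because Rafi's formula sums over \emph{all} subsurface projections, and Masur's criterion only says $\gamma_\omega$ \emph{recurs} to the thick part --- it may make arbitrarily long excursions into the thin part, during which the subsurface terms are not controlled by the main curve-complex term. Your assertion that the sum is ``dominated along a thick geodesic by the curve-complex gap'' is precisely what would need proof, and is the obstruction Duchin ran into (obtaining tracking only along the subsequence of thick times). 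The paper's point is that the bi-infinite ergodic trick needs nothing beyond a $G$-equivariant, measurable, a.e.-defined assignment of a geodesic to a pair of boundary points, which for $\mathcal{PMF}$ is immediate from the horizontal/vertical foliation correspondence for quadratic differentials.
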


The theorem answers a question posed by Kaimanovich \cite{Ka00}. 
The theory of random products of group elements goes back to Furstenberg, who established a 
first multiplicative ergodic theorem for random walks on the group $G = GL_n(\mathbb{R})$  
(Furstenberg-Kesten \cite{FK}), then generalized to stationary, not necessarily independent increments by Oseledets \cite{O}.

In the 80's, Kaimanovich \cite{Ka87} realized that the multiplicative ergodic theorem is equivalent to sublinear 
tracking for random walks on the symmetric space $X = GL(n, \mathbb{R})/O(n, \mathbb{R})$ and proved it 
for general symmetric spaces of noncompact type. Moreover, he showed that, as a consequence of the entropy 
criterion \cite{Ka85}, sublinear tracking allows one to identify the \emph{Poisson boundary} of the random walk, 
i.e. to get a Poisson representation formula for bounded $\mu$-harmonic functions (see Theorem \ref{entrocrit}). 
This method has been applied to fundamental groups of compact Riemannian manifolds 
of negative curvature \cite{Ka85}, and word hyperbolic groups \cite{Ka94}.  

Karlsson and Margulis \cite{KarMar} proved sublinear tracking in the case of uniformly convex,
 Busemann non-positively curved spaces (which include CAT(0) spaces). Moreover, in the case of trees Ledrappier 
proved that the tracking is much better than sublinear,  namely logarithmic \cite{Le}.

It is known that Teichm\"uller space is neither Gromov hyperbolic \cite{MW} nor Busemann non-positively curved \cite{Mas75}, 
so the previous arguments do not apply; Kaimanovich and Masur \cite{KM} proved that for a random walk on the mapping class group such that the support 
of $\mu$ generates a non-elementary subgroup, almost every sample path converges to the \emph{Thurston boundary} $\mathcal{PMF}$, 
and in particular the limit foliation is almost surely uniquely ergodic. This allowed them to identify the 
Poisson boundary of such a walk with (a subset of) $\mathcal{PMF}$.
In 2005, Duchin \cite{Du} proved sublinear tracking along subsequences of times in which the limit geodesic lies 
in the thick part of Teichm\"uller space.

Let us note that Teichm\"uller space also carries the \emph{Weil-Petersson metric}, 
which is CAT(0) (though incomplete), so sublinear tracking in that metric can be proven using the 
original argument of Karlsson and Margulis \cite{KL}.

\vskip 0.3 cm 

Our method is purely ergodic theoretic, and it can be applied much more generally 
whenever one can find a compactification $\overline{X}$ of the space $X$ on which the action of $G$ extends, 
and which satisfies a few geometric properties (see Theorem \ref{mainabs}). One sufficient condition is the 
\emph{stable visibility} of the boundary: we say a compactification is \emph{stably visible} if any sequence of 
geodesics whose endpoints converge to two distinct points on the boundary intersects some bounded set of $X$ (see section \ref{tightness}).
If that property holds, we have sublinear tracking:
\begin{theorem} 
Let $G$ be a countable group acting via isometries on a proper, geodesic, metric space $(X, d)$ 
with a non-trivial, stably visible compactification $\overline{X}$. Moreover, let $\mu$ be a probability measure on 
$G$ such that the subgroup generated by $\mu$ is non-elementary, and $\mu$ has finite first moment. Then 
there exists $A \geq 0$ such that, for each $x \in X$ and for almost every sample path $(w_n x)$ there exists 
a geodesic ray $\gamma : [0, \infty) \to X$ such that 
$$\lim_{n  \to \infty} \frac{d(w_n x, \gamma(An))}{n} = 0.$$
\end{theorem}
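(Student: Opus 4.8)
The plan is to run a Karlsson–Margulis style argument, but replace the CAT(0)/Busemann geometry by the stable visibility of the compactification $\overline{X}$, feeding in the convergence of sample paths to the boundary as the geometric input. First I would set up the ergodic framework: let $(\Omega,\mathbb{P},T)$ be the shift space of increment sequences $(g_1,g_2,\dots)$ with the Bernoulli measure $\mu^{\mathbb{N}}$, and consider the function $\varphi(\omega) := d(x, g_1 x)$, which is integrable by the finite first moment hypothesis. Kingman's subadditive ergodic theorem applied to the cocycle $a_n(\omega) := d(x, w_n x)$ gives a constant $A \geq 0$ (the \emph{drift} or \emph{rate of escape}) with $d(x, w_n x)/n \to A$ almost surely. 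If $A = 0$ there is nothing to prove (take $\gamma$ constant, or any ray; the tracking is automatic since $d(w_n x,\gamma)\le d(w_n x,x)+O(1)=o(n)$), so assume $A>0$.

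Next I would use the hypothesis that the group generated by $\mathrm{supp}(\mu)$ is non-elementary together with the existence of the non-trivial compactification to show that almost every sample path converges to a point $\xi = \xi(\omega) \in \partial X$; this is exactly the kind of boundary convergence one extracts from non-elementarity (in the mapping class group case it is Kaimanovich–Masur, in the abstract case it follows from the stationary-measure machinery sketched in the reference to Theorem \ref{mainabs}). The key extra fact I need is a \emph{non-atomicity / two-sided independence} statement: running the walk in reverse (the increments $g_1^{-1}, (g_1 g_2)^{-1},\dots$ or rather the reflected measure $\check\mu$) produces an independent limit point $\xi^-$, and almost surely $\xi^- \neq \xi$. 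Having two distinct boundary points, I pick for each $n$ a geodesic segment $\gamma_n$ from the basepoint $x$ (or from a point near $w_n x$) out toward $\xi$, or better: a geodesic ray $\gamma$ from $x$ to $\xi$, which exists by properness of $X$ (Arzelà–Ascoli limit of segments $[x, w_n x]$, after passing to a subsequence, with the limit landing at $\xi$ because the endpoints converge there).

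Now comes the heart of the matter: I must show $d(w_n x, \gamma(An))/n \to 0$ for this ray $\gamma$. The strategy is the standard one: suppose not, so along some subsequence $n_k$ the point $w_{n_k} x$ is at distance $\geq \epsilon n_k$ from $\gamma$. Consider the geodesic $[w_{n_k} x, \gamma(t_k)]$ for a well-chosen $t_k \approx A n_k$; its endpoints, after translating by $w_{n_k}^{-1}$, look like a geodesic between something converging to $\xi^-$ (the "reverse" limit, since $w_{n_k}^{-1}$ pushes $x$ out to $\xi^-$) and something converging to $\xi \neq \xi^-$. Stable visibility then forces this translated geodesic to meet a fixed bounded ball $B(x,R)$, i.e. the original geodesic $[w_{n_k} x, \gamma(t_k)]$ passes within $R$ of $w_{n_k} x$ — wait, of $w_{n_k} B(x,R)$. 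Combining the resulting almost-triangle-equality $d(x, w_{n_k} x) + d(w_{n_k}x, \gamma(t_k)) \approx d(x,\gamma(t_k)) + O(R)$ with the linear growth $d(x,w_{n_k}x)\sim An_k$ and $d(x,\gamma(t_k)) = t_k \approx A n_k$ forces $d(w_{n_k}x,\gamma(t_k)) = o(n_k)$, contradicting the assumption. I expect the main obstacle to be precisely this last step: making the "the translated geodesic passes near $x$, hence the original passes near $w_n x$, hence the triangle is thin" chain quantitatively correct \emph{uniformly in $n$} — stable visibility only gives a bounded set depending on the two boundary points, and one needs an \emph{equi}-visibility along the (random but convergent) sequence of endpoints, which is where the work of Theorem \ref{mainabs}'s hypotheses and a measurability/selection argument (choosing $t_k$, and handling the null set where things degenerate) really goes in. Finally, I would note that once tracking holds for the ray obtained as a subsequential limit of $[x, w_n x]$, a short argument upgrades it to a ray with $\gamma(0) = x$ exactly, and the constant $A$ is the drift computed above.
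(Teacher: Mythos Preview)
Your ingredients are right---drift via Kingman, forward and backward boundary limits, stable visibility as the geometric input---but the contradiction step does not work as written. When you translate the segment $[w_{n_k}x,\gamma(t_k)]$ by $w_{n_k}^{-1}$ you obtain $[x,\,w_{n_k}^{-1}\gamma(t_k)]$: one endpoint is the \emph{fixed} basepoint $x$, so stable visibility (which needs both endpoints to escape to distinct points of $\partial X$) gives nothing. If instead you try to use $w_{n_k}^{-1}x$ as an endpoint, note that the sequence $w_n^{-1}x=g_n^{-1}\cdots g_1^{-1}x$ is \emph{not} a sample path of the reflected walk $\check\mu$ (the increments act on the left), so its almost-sure convergence to some $\xi^-\in\partial X$ is not available from the hypotheses; the genuine reverse limit must be built from a bilateral extension. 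More fundamentally, even granting two distinct boundary points, your scheme asks for thinness of the triangle $(x,w_{n_k}x,\gamma(t_k))$ near the moving vertex $w_{n_k}x$, and stable visibility does not deliver that: it bounds the distance from a fixed point to a geodesic with escaping endpoints, not the defect of the triangle inequality at a vertex that itself runs to infinity. You correctly flag this as ``the main obstacle'', and it is a real one, not a technicality.

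The paper bypasses the contradiction scheme entirely. One works on the bilateral shift $(G^{\mathbb Z},\mu^{\mathbb Z},\sigma)$ and defines $\mathrm{bnd}^\pm(\underline g)$ from the positive and negative coordinates respectively; both are honest random-walk limits and are a.s.\ distinct by non-atomicity. Let $P(\eta,\xi)$ be the pencil of bi-infinite geodesics joining $\eta$ to $\xi$ (nonempty and at bounded distance from $x$ by stable visibility plus properness), and set $f(\underline g):=\sup_{\gamma\in P(\mathrm{bnd}^+,\mathrm{bnd}^-)}d(x,\gamma)$. The crucial identity is that $G$-equivariance of $P$ together with $\mathrm{bnd}^\pm(\sigma^n\underline g)=w_n^{-1}\,\mathrm{bnd}^\pm(\underline g)$ gives
\[
f(\sigma^n\underline g)=\sup_{\gamma\in P(\mathrm{bnd}^+(\underline g),\,\mathrm{bnd}^-(\underline g))}d(w_nx,\gamma).
\]
Since $|f\circ\sigma-f|\le d(x,g_1x)\in L^1$ by the first-moment hypothesis, the elementary coboundary lemma (if $f\ge0$ and $f\circ T-f\in L^1$ for an ergodic $T$, then $f(T^n\omega)/n\to0$ a.s.) yields $d(w_nx,\gamma)/n\to0$ directly for every $\gamma$ in the pencil---no thin-triangle estimate is needed. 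The ray form and the parametrisation by $An$ are then read off from $d(x,w_nx)/n\to A$. The point is that your one-sided ray to $\xi$ should be replaced by the bi-infinite geodesic joining $\xi^+$ to $\xi^-$, and your quantitative contradiction by the qualitative equivariance identity above.
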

Several interesting compactifications are stably visible, for instance: 
\begin{enumerate}
\item the \emph{hyperbolic compactification} of Gromov hyperbolic spaces (section \ref{Ghyp});
\item the \emph{end compactification} of Freudenthal and Hopf; 
\item the \emph{Floyd compactification} (section \ref{floyd}); 
\item the \emph{visual compactification} of a large class of CAT(0) spaces (section \ref{hadamard}).
\end{enumerate}
As long as these boundaries are non-trivial (i.e. they contain at least 3 points), our argument 
yields sublinear tracking in all these spaces. 

As an example, let us consider the action of a finitely generated group $G$ on its Cayley graph $X$, 
which is a geodesic space when equipped with the word metric (with respect to some choice of generators).
If $G$ is word hyperbolic, sublinear tracking in the word metric follows from considering the hyperbolic compactification
(a different proof in this case is in \cite{Ka94}). However, our method works under much weaker conditions, 
for instance as long as the \emph{Floyd boundary} is non-trivial. 
This includes the case of groups with infinitely many ends, as well as non-elementary Kleinian groups 
and relatively hyperbolic groups (see section \ref{floyd}).



Another application is to discrete groups of isometries of CAT(0) spaces, 
such as the fundamental groups of Riemannian manifolds of nonpositive sectional curvature.
In this case, a general result has been obtained in \cite{KarMar}. However, our method also works 
under some restrictions: namely, if $X$ is a Hadamard space of rank one and 
$G$ a discrete group of isometries of $X$ which satisfies the \emph{duality condition} of Eberlein-Chen (see section \ref{hadamard}).

Finally (section \ref{ll}), we shall apply our technique to random walks on lamplighter groups over trees. Lamplighter groups 
have been the center of much study because they provided several counterexamples to long-standing conjectures. 
(see especially \cite{KaiVer} and \cite{Ers}).
The Poisson boundary for lamplighter random walks over trees has been analyzed by Karlsson and Woess \cite{KarWoe}.

Note that in all the abovementioned cases our results, together with the ray criterion of Kaimanovich (Theorem \ref{entrocrit}), 
provide an identification of the Poisson boundary of the walk with a certain geometric boundary.

Note moreover that the main argument (Theorem \ref{mainabs}) extends to the case of stationary, not necessarily independent increments, i.e.
to integrable, ergodic cocycles (see Remark \ref{rem:coc}), once one assumes the almost sure convergence 
of sample paths to the boundary.

The idea of the proof is in all cases the following. Suppose for the sake of clarity that the probability measure $\mu$ has finite support, 
so the length of each step of the random walk is bounded: then, if a sample path escapes linearly from the limit geodesic, 
it takes a linear number of steps to come back close to it, hence there will be a positive frequency of times for 
which the walk is far from the geodesic. 
On the other hand, by the ergodic theorem applied to the space of bilateral sample paths, the frequency of times 
the sample path is within bounded distance from the limit geodesic is positive: since we can choose these proportions independently, 
we get a contradiction.









\medskip
\textbf{Acknowledgements.}
I wish to thank Vadim Kaimanovich, Vaibhav Gadre, Joseph Maher and Curtis McMullen for useful comments and discussions.
I especially wish to thank Anders Karlsson for many suggestions and his hospitality in Geneva in the summer of 2012.

\section{General setting}

Let $(X, d)$ be a metric space. A \emph{geodesic segment} is an isometric 
embedding of a segment $[a, b]$ into $X$, i.e. a map $\gamma : [a, b] \to X$ such that 
$d(\gamma(s), \gamma(t)) = |s-t|$ for all $s, t \in [a, b]$. A \emph{geodesic ray} is 
an isometric embedding $\gamma : [0, \infty) \to X$, while a \emph{geodesic line} is an isometric embedding
$\gamma : (-\infty, \infty) \to X$. We shall denote as $\Gamma X$ the set of geodesic lines in $X$, and as 
$\mathcal{P}(\Gamma X)$ the set of all subsets of $\Gamma X$.
A metric space is \emph{geodesic} if any two points can be joined by a geodesic segment.
It is \emph{proper} if closed balls are compact.

Let now $G$ be a countable group acting by isometries on a geodesic metric space $(X, d)$.
A \emph{bordification} 
$\overline{X}$ of $X$ will be a 
Hausdorff, second-countable 
topological space such that $X$ is homeomorphic to an open dense subset of $\overline{X}$, and such that 
the action of $G$ on $X$ extends to an action on $\overline{X}$ by homeomorphisms. 
In the following we will always identify $X$ with a subset of $\overline{X}$, and denote by $\partial X := \overline{X} \setminus X$ 
the \emph{boundary} of $X$. A bordification is \emph{non-trivial} if $\partial X$ contains at least three points.
\begin{definition}
A subgroup $G' \subseteq G$ is called \emph{elementary} with respect to the bordification $\overline{X}$ 
if it fixes a finite subset of $\partial X$; otherwise, it is called \emph{non-elementary}.
\end{definition}
Finally, a compact bordification will be called a \emph{compactification}.
Note that by our definition a compactification $\overline{X}$ is metrizable; 
moreover, the existence of a compactification according to the previous definition forces $X$ to be locally compact.

\subsection{Random walks}

Let $\mu$ be a probability measure on $G$. The \emph{step space} $\Omega:= G^\mathbb{N}$ is the space of infinite sequences
of group elements, which we will consider as a probability space with the product measure $\mathbb{P} := \mu^\mathbb{N}$.
For each $n$, let us define the $G$-valued random variable $w_n : \Omega \to G$
$$(g_1, g_2, \dots) \mapsto w_n := g_1 \dots g_n$$
given by choosing each element $g_i \in G$ independently according to the measure $\mu$, and taking the product of the 
first $n$ elements in the sequence.
Moreover, if we fix a basepoint $x \in X$, we can let $w_n$ act on $x$ hence the sequence 
$(w_n x )_{n \in \mathbb{N}}$ is a sequence of random variables on the space $(\Omega, \mathbb{P})$ 
with values in $X$ which we will call a \emph{random walk} on $X$.

A probability measure $\mu$ on $G$ is said to have \emph{finite first moment} if the average step size is finite:
$$\int_G d(x, gx) \ d\mu(g) < \infty$$
for some $x \in X$. If $\mu$ has finite first moment, then almost every sample path escapes towards 
infinity at some well-defined linear rate $A$: 

\begin{proposition}
Let $\mu$ be a probability measure on $G$ with finite first moment. Then there exists $A \geq 0$ such that 
$$\lim_{n \to \infty} \frac{d(x, w_n x)}{n} = A$$
for each $x \in X$ and $\mathbb{P}$-a.e. sample path. 
\end{proposition}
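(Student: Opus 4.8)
The plan is to deduce this from Kingman's subadditive ergodic theorem applied to the shift on the step space. Fix a basepoint $x \in X$ and, for $\omega = (g_1, g_2, \dots) \in \Omega$, set $a_n(\omega) := d(x, w_n(\omega)\, x)$. Let $T : \Omega \to \Omega$ denote the shift $(g_1, g_2, \dots) \mapsto (g_2, g_3, \dots)$. Since $\mathbb{P} = \mu^{\mathbb{N}}$ is a product measure, $T$ is measure-preserving and ergodic (indeed mixing, by the Kolmogorov zero-one law). The first step is to check the subadditivity relation $a_{n+m}(\omega) \leq a_n(\omega) + a_m(T^n \omega)$: by the triangle inequality $d(x, w_{n+m} x) \leq d(x, w_n x) + d(w_n x, w_{n+m} x)$, and because $G$ acts by isometries, $d(w_n x, w_{n+m} x) = d(x, w_n^{-1} w_{n+m} x) = d(x, (g_{n+1} \cdots g_{n+m}) x) = a_m(T^n \omega)$, which gives the claim.

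Next I would verify the integrability hypothesis: $a_n \geq 0$, so it suffices to note $\int_\Omega a_1 \, d\mathbb{P} = \int_G d(x, gx)\, d\mu(g) < \infty$ by the finite first moment assumption (and then $\int a_n^+ \leq n \int a_1 < \infty$ follows from iterating subadditivity). Kingman's theorem then yields a $T$-invariant function $A(\omega)$ with $\frac{a_n(\omega)}{n} \to A(\omega)$ for $\mathbb{P}$-a.e. $\omega$ and in $L^1$; by ergodicity of $T$ this limit is a.e. equal to the constant $A = \inf_n \frac1n \int_\Omega a_n \, d\mathbb{P} = \lim_n \frac1n \int_\Omega a_n \, d\mathbb{P}$, and $A \geq 0$ since each $a_n \geq 0$.

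Finally, to see the rate $A$ does not depend on the basepoint, take $x, y \in X$ and observe that, again using that $G$ acts isometrically, $|d(x, w_n x) - d(y, w_n y)| \leq d(x, y) + d(w_n x, w_n y) = 2\, d(x,y)$, a bound independent of $n$; dividing by $n$ and letting $n \to \infty$ shows the limits coincide for $x$ and $y$.

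I do not expect a serious obstacle here: the argument is a routine application of Kingman's theorem, and the only points requiring care are the correct bookkeeping in the subadditive cocycle relation (making sure the isometric action is used to reindex the increment as $a_m \circ T^n$) and confirming that ergodicity of the Bernoulli shift upgrades the a.e. limit from an invariant function to a deterministic constant. If one prefers to avoid Kingman, one could alternatively invoke the Kesten--Furstenberg-type argument directly, but the subadditive ergodic theorem is the cleanest route.
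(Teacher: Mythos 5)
Your proof is correct and follows precisely the route the paper indicates: the paper states only that the proposition ``is an immediate consequence of Kingman's subadditive ergodic theorem,'' and you have supplied the standard details — the subadditive cocycle $a_n = d(x, w_n x)$ over the Bernoulli shift, the integrability from finite first moment, the upgrade from invariant limit to constant via ergodicity, and the basepoint-independence via the uniform bound $|d(x,w_nx)-d(y,w_ny)|\le 2d(x,y)$.
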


The proposition is an immediate consequence of Kingman's subadditive ergodic theorem. Note that in general $A$ can very well 
be zero, in which case the random walk remains at sublinear distance from the starting point. If instead $A$ is positive, 
we can ask whether the random walk converges ``in direction''. First of all, it makes sense to ask whether almost every sample path 
converges to some well-defined point on the boundary of $X$; moreover, since $(X, d)$ is a geodesic space, we can ask whether
there exists a geodesic ray in $X$ which approximates the sample path.
If $\mathbb{P}$-a.e. sample path converges to some point in $\partial X$, then a \emph{harmonic measure} (or \emph{hitting measure}) $\nu$ 
is defined on $\partial X$ as the pushforward of $\mathbb{P}$ with respect to the limit map:
$$\nu(A) := \mathbb{P}( \omega \in \Omega \ : \lim_{n \to \infty} w_n x \in A  ).$$
Let us denote $\check{\mu}$ the reflected measure
$$\check{\mu}(g) := \mu(g^{-1}) \qquad \forall g \in G.$$
Moreover, let $\check{\mathbb{P}}$ be the product measure $\check{\mu}^\mathbb{N}$ on $G^\mathbb{N}$, and $\check{\nu}$ 
the hitting measure on $\partial X$ relative to the reflected measure.

\medskip

Note that the measure $\nu$ is \emph{stationary}, in the sense that it satisfies the equation
$$\sum_{g \in G} \mu(g) g \nu = \nu.$$
In general, a measure space $(B, \nu)$ on which $G$ acts measurably is called a \emph{$\mu$-boundary} (or \emph{Furstenberg boundary}) 
if $\nu$ is a stationary measure and for almost every sample path $(w_n)$ the sequence of measures $(w_n \nu)$ converges to a $\delta$-measure.
The \emph{Poisson} \emph{boundary} of the random walk $(G, \mu)$ is its maximal $\mu$-boundary (it is unique up to sets of measure zero):
it can also be equivalently defined in other ways, for instance as the space of ergodic components of the shift map on 
the path space (see \cite{KaiVer} and \cite{Ka00}). 
Moreover, the Poisson boundary provides a \emph{Poisson representation formula} for harmonic functions on the group: 
indeed, if $(B, \nu)$ is a Poisson boundary of the walk $(G, \mu)$, then the formula
$$f(g) = \int_{B} \hat{f}(x)\ d(g \nu)(x)$$
provides an isomorphism between the space $L^\infty(B, \nu)$ of bounded functions on the boundary
and the space $H^\infty_\mu(G)$ of bounded harmonic functions on $G$.

By definition, the Poisson boundary is an abstract measure space;  
on the other hand, whenever the group $G$ has some geometric structure, it is often possible to construct  
a ``geometric'' boundary of $G$ (or of the space $X$ on which $G$ acts) using this structure, and a major theme of 
research is to compare this boundary to the Poisson boundary.
In this respect, an important criterion was given by Kaimanovich \cite{Ka85}, using the sublinear tracking property. 
We say that the action of $G$ on $X$ has \emph{exponentially bounded growth} if there exists $x \in X$ and 
$C \geq 0$ such that 
$$\#\{g\in G \ : \ d(x, gx) \leq R \} \leq e^{CR} \qquad \forall R > 0.$$
The following criterion implies that sublinear tracking, together with exponentially bounded growth, 
is sufficient to identify the Poisson boundary of the walk.
\begin{theorem}[\cite{Ka85}] \label{entrocrit}
Let $G$ be a countable group acting by isometries on the metric space $(X,d)$, and $\mu$ a probability measure 
on $G$ with finite first moment. Let $(B, \nu)$ be a $\mu$-boundary, and $\pi_n : B \rightarrow X$ be a sequence 
of measurable maps such that, for almost every sample path $(w_n x)$, one has
$$\lim_{n \to \infty} \frac{d(w_n x , \pi_n(\xi))}{n} = 0$$
where $\xi$ is the image of $w_n$ in $B$.  
If the action has exponentially bounded growth, then $(B, \nu)$ is the Poisson boundary of $(G, \mu)$.
\end{theorem}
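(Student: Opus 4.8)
The plan is to derive the statement from Kaimanovich's \emph{entropy criterion of maximality} \cite{Ka85}: if $\mu$ has finite entropy $H(\mu) := -\sum_{g \in G} \mu(g) \log \mu(g) < \infty$, then a $\mu$-boundary $(B,\nu)$ coincides with the Poisson boundary of $(G,\mu)$ if and only if the conditional Shannon entropies satisfy
$$\lim_{n \to \infty} \frac{1}{n}\, H\bigl(w_n \mid \xi\bigr) = 0,$$
where $\xi = \xi(\omega) \in B$ denotes the almost surely defined limit of the sample path (so that $\xi_\ast \mathbb{P} = \nu$). Thus the argument falls into three parts: verifying that $H(\mu) < \infty$, quoting the criterion, and estimating $H(w_n \mid \xi)$ from the tracking hypothesis together with exponentially bounded growth.

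For the first part, set $\ell(g) := d(x,gx)$, so the finite first moment hypothesis reads $L := \int_G \ell \, d\mu < \infty$, and partition $G$ into the shells $S_k := \{ g \in G : k \le \ell(g) < k+1 \}$, which by exponentially bounded growth obey $\# S_k \le e^{C(k+1)}$. With $q_k := \mu(S_k)$, the chain rule for entropy gives
$$H(\mu) \ \le\ H\bigl((q_k)_{k \ge 0}\bigr) + \sum_{k \ge 0} q_k \log \# S_k \ \le\ H\bigl((q_k)_{k}\bigr) + C\Bigl(1 + \sum_{k \ge 0} k\, q_k\Bigr) \ \le\ H\bigl((q_k)_{k}\bigr) + C(1+L),$$
since $\sum_k k\, q_k \le \int_G \ell \, d\mu = L$; and any probability vector on $\mathbb{N}$ with finite mean has finite entropy, bounded by that of the geometric law with the same mean. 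Hence $H(\mu) < \infty$, and the Avez entropy $h := \lim_n \tfrac{1}{n} H(w_n)$ is well defined.

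The crux is the third part. Since $d(w_n x, \pi_n(\xi))/n \to 0$ almost surely, one may fix $\varepsilon_n \downarrow 0$ decaying slowly enough that $p_n := \mathbb{P}\bigl( d(w_n x, \pi_n(\xi)) > \varepsilon_n n \bigr) \to 0$. If $d(gx,y) \le r$ and $d(g'x,y) \le r$ then $d(x, g^{-1}g' x) \le 2r$, so exponentially bounded growth yields $\#\{ g \in G : d(gx, y) \le r \} \le e^{2Cr}$ for every $y \in X$; applied with $y = \pi_n(\xi)$ and $r = \varepsilon_n n$ this shows that, conditionally on $\xi$, the element $w_n$ lies with probability $1 - p_n$ in a subset $B_n(\xi) \subseteq G$ with $\# B_n(\xi) \le e^{2C\varepsilon_n n} = e^{o(n)}$. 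Estimating $H(w_n \mid \xi)$ by the Gibbs inequality against the reference law that is uniform on $B_n(\xi)$ with total weight $\tfrac{1}{2}$ and equals $\tfrac{1}{2} s$ off $B_n(\xi)$ — where $s$ is a fixed law on $G$ with $H(s) < \infty$ and $\int_G \ell \, ds < \infty$, for instance $s(g) \propto e^{-2C\ell(g)}$, which is summable by exponential growth — one obtains a bound of the shape
$$H(w_n \mid \xi) \ \le\ O(1) \;+\; 2C\, \varepsilon_n\, n \;+\; 2C \, \mathbb{E}\Bigl[ \ell(w_n)\, \mathbf{1}_{\{ d(w_n x, \pi_n(\xi)) > \varepsilon_n n \}} \Bigr].$$
The first two summands are $o(n)$. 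For the last one, note that $\ell(w_n)/n = d(x,w_n x)/n$ converges almost surely and, by Kingman's subadditive ergodic theorem, also in $L^1$ (cf.\ the Proposition above); hence the family $\{ \ell(w_n)/n \}_n$ is uniformly integrable, so $\mathbb{E}[\ell(w_n)\, \mathbf{1}_{A_n}]/n \to 0$ for any events $A_n$ with $\mathbb{P}(A_n) \to 0$. Therefore $H(w_n \mid \xi) = o(n)$, and the criterion identifies $(B,\nu)$ with the Poisson boundary.

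The only genuine obstacle I foresee is controlling that last expectation. A crude estimate for the conditional entropy splits off the exceptional event $\{ d(w_n x, \pi_n(\xi)) > \varepsilon_n n \}$ and charges it roughly $\log \#\{ g : \ell(g) \le Kn \}$, which is of order $n$ because the walk escapes linearly, and hence does not vanish. The point is that one need only charge the \emph{expected} displacement over that event, which is genuinely sublinear by uniform integrability; it is here that the finiteness of the first moment — and not merely of the entropy — enters essentially.
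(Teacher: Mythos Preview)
The paper does not prove this theorem: it is stated as a quotation from Kaimanovich \cite{Ka85} (the ``ray criterion'') and used as a black box, so there is no in-paper argument to compare against. Your proposal supplies exactly the standard derivation of the ray criterion from the entropy criterion, and it is correct.

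A couple of remarks on the execution. Your entropy bound via the Gibbs inequality against the mixed reference $q = \tfrac{1}{2}\,\mathrm{Unif}(B_n(\xi)) + \tfrac{1}{2}s$ is clean; note that taking $s(g)\propto e^{-2C\ell(g)}$ already gives $-\log s(g)=2C\ell(g)+\log Z$, so the hypotheses ``$H(s)<\infty$'' and ``$\int\ell\,ds<\infty$'' you list are not actually used---what matters is only the pointwise bound $-\log s(g)=O(1)+2C\ell(g)$ together with $Z<\infty$, both of which follow directly from exponential growth. The uniform integrability step is the genuinely important point, and your justification via $L^1$-convergence in Kingman's theorem is the right one; this is precisely where the finite first moment (rather than mere finite entropy) is needed, as you observe.
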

A simple corollary is that if the rate of escape $A = 0$, the Poisson boundary is trivial (see also \cite{KaiVer}, \cite{De}).


\subsection{Abstract sublinear tracking} \label{abstrack}

The goal of this section is to prove an abstract sublinear tracking criterion. 
Roughly speaking, the only two ingredients that are needed are that almost every sample path converges to the boundary, and that 
given two different points of the boundary, it is possible to choose a geodesic line \emph{in a G-equivariant way}.

\begin{theorem}\label{mainabs}
Let $G$ be a countable group acting by isometries on a geodesic metric space $X$, and let $\overline{X} = X \cup \partial X$
be a bordification of $X$. 
Let $\mu$ be a probability measure on $G$ with finite first moment, and suppose the following are true:
\begin{enumerate}
\item $\mathbb{P}$-a.e. sample path $(w_n x)$ converges to some  $\xi \in \partial X$, 
and $\check{\mathbb{P}}$-a.e. sample path $(\check{w}_n x)$ converges to some  $\eta \in \partial X$.
\item There exists a $G$-equivariant map $P : \partial X \times \partial X \to \mathcal{P}(\Gamma X)$
which associates to any pair of points of the boundary a set of geodesics in $X$ in such a way that the map 
$D : \partial X \times \partial X \to \mathbb{R}$ defined as 
$$D(\eta, \xi) := \sup_{\gamma \in P(\eta, \xi)} d(x, \gamma)$$
is Borel-measurable 
 and finite $\nu \otimes \check{\nu}$-a.e. (note that we set $D(\eta, \xi) = \infty$ if $P(\eta, \xi) = \emptyset$, so the condition includes that $P(\eta, \xi)$ is almost surely non-empty).
\end{enumerate}
Then, there exists $A \geq 0$ such that for $\mathbb{P}$-a.e. sample path $(w_n x)$ there exists a geodesic ray $\gamma :[0, \infty) \to X$ such that 
$$\lim_{n \to \infty} \frac{d(w_n x, \gamma(An))}{n} = 0.$$
\end{theorem}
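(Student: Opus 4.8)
The plan is to follow the heuristic sketched in the introduction: combine the almost-sure convergence of sample paths to a boundary point $\xi$ with an ergodic-theoretic frequency argument on the space of bilateral paths. First I would pass to the bilateral setup: consider the product space $(\Omega \times \check\Omega, \mathbb{P} \otimes \check{\mathbb{P}})$, or rather the space of two-sided sequences, equipped with the Bernoulli shift $T$, which is measure-preserving and ergodic. For a two-sided path, the ``forward'' part converges to some $\xi \in \partial X$ with law $\nu$ and the ``backward'' part converges to some $\eta \in \partial X$ with law $\check\nu$; by hypothesis (1) these limits exist a.e., and by hypothesis (2) the pair $(\eta,\xi)$ lies a.e. in the domain where $P(\eta,\xi) \ne \emptyset$ and $D(\eta,\xi) < \infty$. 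Pick, measurably, a geodesic line $\gamma$ through (or near) $x$ with endpoints $\eta$ and $\xi$; this is where $G$-equivariance of $P$ is essential, since I want the geodesic associated to the shifted path $T\omega$ to be $g_1^{-1}$ applied to the one for $\omega$ (up to the $P$-set), so that the quantity ``distance from $w_n x$ to the geodesic'' becomes a cocycle-like object amenable to the ergodic theorem.

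The core estimate is the following. Define, for the two-sided path $\omega$, the function $\varphi(\omega) := d(x, \gamma_\omega)$ where $\gamma_\omega \in P(\eta_\omega, \xi_\omega)$ realizes (up to $1$, say) the sup defining $D$; this is finite a.e. Then I claim $d(w_n x, g_1 \gamma_{T\omega}) = d(g_1^{-1} w_n x, \gamma_{T\omega})$, and $g_1^{-1}w_n = g_2 \cdots g_n$ is the $(n-1)$-step walk for $T\omega$, whose forward limit is $\xi_{T\omega}$. Iterating, the distance from $w_n x$ to the translated geodesic $w_k \gamma_{T^k\omega}$ is controlled by $\varphi(T^k \omega)$ plus the drift term $d(x, w_k x) - $ (progress along the geodesic). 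The key point, exactly as in the introductory heuristic: if along a positive-density set of times $n$ the path $w_n x$ were at distance $\geq \epsilon n$ from \emph{every} geodesic ray from $x$ to $\xi$, then in particular from $\gamma_\omega$; but since $\varphi(T^k\omega)$ is finite a.e. and, by Birkhoff applied to $\min(\varphi, M)$ and letting $M \to \infty$, the set $\{k : \varphi(T^k\omega) \le M\}$ has density close to $1$ for $M$ large — combined with the bounded-increments reduction (truncate $\mu$; a linear escape from the geodesic forces a linear number of steps to return, giving a positive-density set of ``far'' times) — one gets two positive-density sets of times, ``close to the translated geodesic'' and ``far from every geodesic to $\xi$'', whose overlap must be nonempty once the densities sum to more than $1$. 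Relating ``close to a translated geodesic $w_k\gamma_{T^k\omega}$'' to ``close to a fixed ray from $x$ to $\xi$'' is done by a standard fellow-traveling/limiting argument (the translated geodesics all pass within $\varphi(T^k\omega) + d(x,w_kx)$-controlled distance of the path and converge to $\xi$; a diagonal/compactness argument in the proper space $X$ extracts a limiting geodesic ray $\gamma$ from $x$ to $\xi$), yielding the contradiction and hence $\limsup_n d(w_n x, \gamma)/n = 0$.

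The finite-first-moment (rather than bounded-support) case is handled by the usual truncation: write $\mu = \mu_R + $ (tail), absorb the unbounded steps using that $\sum_n \frac{1}{n}\mathbb{P}(d(x,g_n x) > \epsilon n) < \infty$ by the first-moment hypothesis (Borel–Cantelli), so that a.e. only finitely many steps are longer than $\epsilon n$ and these contribute sublinearly; the ``linear number of steps to return'' argument then goes through with the bounded part. The rate $A$ is the one furnished by Proposition~2 (Kingman), and the ray $\gamma$ is reparametrized by arclength so that $\gamma(An)$ is the point at distance $An$ from $x$, matching $d(x,w_nx) \sim An$.

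The main obstacle I expect is the measurability and equivariance bookkeeping for the selection $\omega \mapsto \gamma_\omega$ out of the \emph{set} $P(\eta_\omega,\xi_\omega)$: one must choose a single geodesic line (or a controlled family) measurably, consistently under the shift, so that the distances form an honest subadditive-type cocycle to which Kingman/Birkhoff applies — the hypothesis only gives a $G$-equivariant \emph{set-valued} map $P$ together with Borel measurability of the sup-function $D$, so the argument should be arranged to use $D(\eta,\xi)$ directly (as an integrable-after-truncation ceiling on how far \emph{some} geodesic in $P$ strays from $x$) rather than committing to a measurable single-valued selection, and then transport this bound along the orbit by equivariance. Making that reduction clean, and then the compactness step that produces the genuine limiting ray from $x$ through $\xi$ (as opposed to a sequence of translated lines), are the two places where care is needed; everything else is the density-counting contradiction already previewed in the introduction.
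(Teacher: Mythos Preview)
Your setup is correct --- passing to the bilateral shift on $G^{\mathbb{Z}}$, using equivariance of $P$ to relate the geodesic set at $\sigma^n\underline{g}$ to the one at $\underline{g}$, and invoking Birkhoff to get a high-density set of ``close'' times --- and your final-paragraph instinct to work with the sup-function $D$ directly rather than a measurable selection is exactly right. But you are missing the one observation that collapses the whole argument into a few lines, and in its absence you introduce a step that is actually a gap.

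The key point you are circling but not landing on is this: define $f(\underline{g}) := D(\textup{bnd}^+(\underline{g}),\textup{bnd}^-(\underline{g})) = \sup_{\gamma\in P} d(x,\gamma)$. By $G$-equivariance of $P$ and the isometry action, one has exactly
\[
f(\sigma^n\underline{g}) \;=\; \sup_{\gamma\in P(\textup{bnd}^+(\underline{g}),\,\textup{bnd}^-(\underline{g}))} d(w_n x,\gamma),
\]
so $f(\sigma^n\underline{g})/n\to 0$ \emph{is} the sublinear tracking statement, for every $\gamma$ in the pencil simultaneously. Now the triangle inequality gives $|f(\sigma\underline{g})-f(\underline{g})|\le d(x,g_1 x)$, which is in $L^1$ by the finite first moment. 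At this point a two-line lemma finishes: if $g:=f\circ T-f\in L^1$ with $T$ ergodic and $f\ge 0$, then Birkhoff applied to $g$ shows $\lim_n f(T^n\omega)/n$ exists a.e., and Birkhoff applied to the indicator of a sublevel set $\{f\le C\}$ of positive measure forces that limit to be $0$. This replaces your entire contradiction-by-density-counting, requires no truncation of $\mu$, and needs no selection $\omega\mapsto\gamma_\omega$. The ray is then simply one half of any fixed $\gamma\in P(\textup{bnd}^+,\textup{bnd}^-)$, reparametrized so that $|t_n|/n\to A$.

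The genuine gap in your plan is the ``diagonal/compactness argument in the proper space $X$'' to extract a limiting ray from $x$ to $\xi$. The theorem does \emph{not} assume $X$ is proper (this is emphasized right after the proof in the paper), so no Ascoli--Arzel\`a step is available, and in any case none is needed: once $\sup_{\gamma\in P} d(w_nx,\gamma)/n\to 0$, you already have your geodesic line in hand --- any element of $P(\textup{bnd}^+,\textup{bnd}^-)$ --- and half of it is the desired ray. Your ``relating close-to-translated-geodesic to close-to-fixed-ray'' step, and the attendant truncation and return-time amplification, are all detours around the missing identity $f\circ\sigma^n = \sup_\gamma d(w_nx,\gamma)$.
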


The proof of the theorem is based on the following elementary lemma in ergodic theory:

\begin{lemma}
Let $\Omega$ be a measure space with a probability measure $\lambda$, and let $T : \Omega \to \Omega$ be a measure-preserving, 
ergodic transformation.
Let $f : \Omega \to \mathbb{R}$ be a non-negative, measurable function, and define the function $g : \Omega \to \mathbb{R}$ as
\begin{equation} \label{cc}
g(\omega) := f(T \omega) - f(\omega) \qquad \forall \omega \in \Omega.
\end{equation}
If $g \in L^1(\Omega, \lambda)$, then, for $\lambda$-almost every $\omega \in \Omega$, one has 
$$\lim_{n \to \infty} \frac{f(T^n \omega)}{n} = 0.$$
\end{lemma}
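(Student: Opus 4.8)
The plan is to exploit the fact that the increments $g(T^k\omega)$ telescope: summing \eqref{cc} over $k = 0, \dots, n-1$ yields
$$\sum_{k=0}^{n-1} g(T^k \omega) = f(T^n \omega) - f(\omega),$$
so it suffices to show that the Birkhoff averages $\frac1n \sum_{k=0}^{n-1} g(T^k\omega)$ tend to $0$ for $\lambda$-a.e.\ $\omega$. Since $g \in L^1(\Omega,\lambda)$ and $T$ is ergodic, the Birkhoff ergodic theorem already guarantees that these averages converge almost everywhere to the constant $\int_\Omega g\, d\lambda$; thus the whole content of the lemma reduces to the identity $\int_\Omega g\, d\lambda = 0$.

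The first step is to establish this identity. If $f$ were integrable it would be immediate from the $T$-invariance of $\lambda$, but $f$ is only assumed non-negative and measurable, so I would argue by truncation. For $M > 0$ set $f_M := \min(f, M)$, which is bounded and hence lies in $L^1(\Omega, \lambda)$ because $\lambda$ is a probability measure. Put $g_M := f_M \circ T - f_M$. Since $x \mapsto \min(x, M)$ is $1$-Lipschitz, one has the pointwise bound $|g_M| \le |g|$, while $f_M \in L^1$ and the $T$-invariance of $\lambda$ give $\int_\Omega g_M\, d\lambda = 0$ for every $M$. As $M \to \infty$ we have $g_M(\omega) \to g(\omega)$ for every $\omega$ (eventually $M$ exceeds both $f(\omega)$ and $f(T\omega)$), and the family $(g_M)$ is dominated by $|g| \in L^1$; dominated convergence then yields $\int_\Omega g\, d\lambda = \lim_{M \to \infty} \int_\Omega g_M\, d\lambda = 0$.

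With the identity in hand the proof concludes at once: Birkhoff gives $\frac1n \sum_{k=0}^{n-1} g(T^k\omega) \to 0$ for $\lambda$-a.e.\ $\omega$, and combining this with the telescoping identity above,
$$\lim_{n\to\infty} \frac{f(T^n\omega) - f(\omega)}{n} = 0,$$
whence $\frac{f(T^n\omega)}{n} \to 0$ almost everywhere, since $f(\omega)$ is a fixed finite real number. The one step that genuinely requires care, rather than being routine, is precisely the passage from $g \in L^1$ to $\int_\Omega g\, d\lambda = 0$ without the a priori integrability of $f$; the truncation together with the Lipschitz bound and dominated convergence is the device that makes it work, and everything else in the argument is a direct appeal to the ergodic theorem.
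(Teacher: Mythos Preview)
Your proof is correct. Both you and the paper begin identically: telescope to get $\frac{1}{n}\sum_{k=0}^{n-1} g(T^k\omega) = \frac{f(T^n\omega)-f(\omega)}{n}$, and apply Birkhoff's theorem to see that $\frac{f(T^n\omega)}{n}$ converges a.e.\ to $\int_\Omega g\,d\lambda$. The divergence is in how each argument identifies this integral as zero.

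You compute $\int g\,d\lambda$ directly: truncate $f$ to $f_M = \min(f,M)$, observe that the $1$-Lipschitz property of $\min(\cdot,M)$ gives $|g_M|\le |g|$, that $f_M\in L^1$ forces $\int g_M = 0$ by $T$-invariance, and pass to the limit by dominated convergence. The paper instead argues dynamically: having established that the limit exists, it picks $C$ with $\lambda(\{f\le C\})>0$, applies the ergodic theorem to the indicator of this set to deduce that $f(T^n\omega)\le C$ along a set of times of positive density, and concludes $\liminf_n f(T^n\omega)/n \le 0$; combined with non-negativity of $f$, the existing limit must be $0$. Your route is arguably more direct and yields the extra information $\int g\,d\lambda=0$ explicitly; the paper's route avoids the truncation device but invokes the ergodic theorem a second time and relies on a recurrence-type observation. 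Both exploit non-negativity of $f$ in an essential way (you need $0\le f_M\le M$ to place $f_M$ in $L^1$; the paper needs $f\ge 0$ to pin down the liminf).
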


\begin{proof}
As a consequence of \eqref{cc} we can write, for each $n \geq 1$ and each $\omega \in \Omega$,
$$\frac{\sum_{k = 0}^{n-1} g(T^k \omega)}{n} = \frac{f(T^n \omega) - f(\omega)}{n}.$$
By the ergodic theorem, for almost every $\omega$, the left-hand side converges to $\int_\Omega g \ d\lambda$.
This implies that for $\lambda$-almost every $\omega$, the limit
$$\lim_{n \to \infty}  \frac{f(T^n \omega)}{n}$$
exists.
On the other hand, there exists $C >0$ such that $\Omega_C := \{ \omega \ : \ f(\omega) \leq C \}$
has positive measure $\mu(\Omega_C) > 0$.
Again by the ergodic theorem, for $\lambda$-almost every $\omega$, the set 
$$\{ n \in \mathbb{N} \ : \ f(T^n \omega) \leq C \}$$
has positive density, 
which implies that
$$\liminf_{n \to \infty} \frac{f(T^n \omega)}{n} \leq \liminf_{n \to \infty} \frac{C}{n} = 0.$$
As a consequence, since the limit exists, we have
$$\lim_{n \to \infty} \frac{f(T^n \omega)}{n} =  \liminf_{n \to \infty} \frac{f(T^n \omega)}{n} = 0.$$

\end{proof}

\begin{proof}[Proof of Theorem \ref{mainabs}]
Let us apply the lemma with $\Omega := G^\mathbb{Z}$ the space of all bi-infinite sequences, 
endowed with the product measure $\lambda := \mu^\mathbb{Z}$.
Let $x \in X$ be a fixed base point, and define the boundary maps $\textup{bnd}^\pm : G^\mathbb{Z} \to \partial X$ as
\begin{equation} 
\textup{bnd}^+(\underline{g}) := \lim_{n \to \infty}  g_1 \cdots g_n x
\qquad 
\textup{bnd}^-(\underline{g}) := \lim_{n \to \infty} g_0^{-1}g_{-1}^{-1} \cdots g_{-n}^{-1} x.
\end{equation}
Note that by condition 1. $\textup{bnd}^\pm$ are defined for almost every sequence in $G^\mathbb{Z}$.
Let us now take $T := \sigma$ the shift on the space of bi-infinite sequences, which acts ergodically on $G^\mathbb{Z}$.
Note that, for each $\underline{g} \in G^\mathbb{Z}$ (recall $w_n = g_1 \cdots g_n$), 
\begin{equation} \label{bndequiv}
\textup{bnd}^+(\sigma^n \underline{g}) = w_n^{-1} \textup{bnd}^+(\underline{g}) \qquad \textup{bnd}^-(\sigma^n \underline{g}) = 
w_n^{-1} \textup{bnd}^-( \underline{g}).
\end{equation}
We are now ready to define the non-negative function $f : G^\mathbb{Z} \to \mathbb{R}$ as 
the maximum distance between the base point and any geodesic joining the two limits of the random walk given by 
a fixed sequence:
$$f(\underline{g}) := \sup_{\gamma \in P(\textup{bnd}^+(\underline{g}), \textup{bnd}^-(\underline{g}))} d(x, \gamma).$$
By condition 2., $f$ is finite for almost all sequences. Let us now see how the shift $\sigma$ acts on $f$: 
by definition one has 
$$f(\sigma^n \underline{g}) =  \sup_{\gamma \in P(\textup{bnd}^+(\sigma^n \underline{g}), \textup{bnd}^-(\sigma^n \underline{g}))} d(x, \gamma)$$
and, since $G$ acts by isometries,
$$f(\sigma^n \underline{g})   = \sup_{w_n \gamma \in P(w_n \textup{bnd}^+(\sigma^n \underline{g}), w_n \textup{bnd}^-(\sigma^n \underline{g}))} d(w_n x, w_n \gamma)$$
then, by eq. \eqref{bndequiv} we have
$$  f(\sigma^n \underline{g}) = 
\sup_{\gamma \in P(\textup{bnd}^+(\underline{g}), \textup{bnd}^-(\underline{g}))} d(w_n x,\gamma). $$
Now, by triangle inequality we have
\begin{equation} \label{triangle}
|f(\sigma \underline{g}) - f(\underline{g})| \leq d(x, g_0 x)
\end{equation}
hence the finite first moment implies that the function $F(\underline{g}) := f(\sigma \underline{g}) - f(\underline{g})$ is integrable, so one can apply the lemma and get, 
for a.e. $\underline{g} \in G^\mathbb{Z}$, 
$$\lim_{n \to \infty} \frac{\sup_{ \gamma \in P(\textup{bnd}^+(\underline{g}), \textup{bnd}^-(\underline{g})) } d(w_n x, \gamma)}{n} = 0.$$
This obviously implies, for each $\gamma \in P(\textup{bnd}^+(\underline{g}), \textup{bnd}^-(\underline{g}))$, 
$$\lim_{n \to \infty} \frac{ d(w_n x, \gamma)}{n} = 0.$$
Let us now choose some $\gamma \in P(\textup{bnd}^+(\underline{g}), \textup{bnd}^-(\underline{g}))$ and fix a parametrization 
$\gamma : \mathbb{R} \to X$. The previous equation implies the existence of a sequence $(t_n)$ of times such that
$$\lim_{n \to \infty} \frac{ d(w_n x, \gamma(t_n))}{n} = 0.$$
Notice that this implies $\lim_{n \to \infty}\frac{|t_n|}{n} = A$; now, by finite first moment 
for a.e. sample path 
 $\frac{d(w_n x, w_{n+1} x)}{n} \to 0$, hence either $\lim_{n \to \infty} \frac{t_n}{n} = A$ or 
$\lim_{n \to \infty} \frac{t_n}{n} = -A$. In the first case, $w_n x$ is approximated by the positive ray $\gamma\mid_{[0, \infty)}$, 
in the second by the negative ray $\gamma\mid_{(-\infty, 0]}$. 
\end{proof}

Let us observe that the proof does not require the space $\partial X$ to be compact, 
so it can be applied to non-proper spaces as long as one can prove convergence to the boundary.
Moreover, we intuitively think of the map $P : \partial X \times \partial X \to \Gamma X$ as choosing
a geodesic which ``joins'' two points of the boundary, but actually the only property we need 
is that the choice is $G$-equivariant. In particular, we do not require that, if the geodesic $\gamma$
belongs to $P(\eta, \xi)$, $\gamma(t)$ tends to $\eta$ (or $\xi$) as $t \to \infty$.

\begin{remark} \label{rem:coc}
We also never use that the increments of our walk are independent, so it works more generally
in the context of cocycles. Indeed, let $u : \Omega \to G$ be a measurable map defined on some probability space $(\Omega, \lambda)$, 
and suppose $T : \Omega \to \Omega$ is an ergodic, measure preserving map. We can define the cocycle 
$u : \mathbb{N} \times \Omega \to G$ as 
$$u(n, \omega) := u(\omega) u(T\omega) \cdots u(T^{n-1} \omega).$$
The cocycle is \emph{integrable} if $\int_\Omega d(x, u(n, \omega)x)\ d\lambda(\omega) < \infty$ for some $x \in X$. 
The previous argument yields sublinear tracking for integrable, ergodic cocycles, once again under the hypothesis 
of convergence to the boundary.
\end{remark}


 

\subsection{Stable visibility} \label{tightness}

Let us now formulate a geometric property of a compactification that, 
at least in the case of proper spaces, is sufficient to yield sublinear tracking. 
We call a compactification $\overline{X}$ \emph{stably visible} if any sequence of geodesic segments 
whose endpoints converge to two distinct points on the boundary intersects some bounded set of $X$:

\begin{definition}  \label{svdef}
A compactification $\overline{X}$ of a geodesic metric space $(X, d)$ is \emph{stably visible} if the following holds:
given any sequence $\gamma_n = [\eta_n, \xi_n]$ of geodesic segments in $X$ connecting $\eta_n$ with $\xi_n$ 
and such that $\xi_n \to \xi \in \partial X$, $\eta_n \to \eta \in \partial X$ with $\eta \neq \xi$, there exists 
a bounded set $B$ in $X$ which intersects all geodesics $\gamma_n$.
\end{definition}

The goal of this section is to prove the following theorem:

\begin{theorem} \label{tighttracking}
Let $G$ be a countable group acting via isometries on a proper, geodesic, metric space $(X, d)$ 
with a non-trivial, stably visible compactification $\overline{X}$. Moreover, let $\mu$ be a probability measure on 
$G$ such that the subgroup generated by $\mu$ is non-elementary, and $\mu$ has finite first moment. Then 
there exists $A \geq 0$ such that, for each $x \in X$ and for almost every sample path $(w_n x)$ there exists 
a geodesic ray $\gamma : [0, \infty) \to X$ such that 
$$\lim_{n  \to \infty} \frac{d(w_n x, \gamma(An))}{n} = 0.$$
Moreover, if the action has exponentially bounded growth, then $A > 0$.
\end{theorem}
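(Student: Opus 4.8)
# Proof Proposal

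The plan is to deduce Theorem \ref{tighttracking} from the abstract criterion of Theorem \ref{mainabs}. The two hypotheses of Theorem \ref{mainabs} must be verified: (1) almost sure convergence of sample paths to the boundary, for both $\mu$ and $\check\mu$, and (2) the existence of a $G$-equivariant assignment $P$ of geodesic lines to pairs of boundary points whose associated distance function $D$ is measurable and almost surely finite.

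For hypothesis (1), I would invoke a boundary convergence result available in this setting: since $\overline X$ is a metrizable compactification of a proper space and the subgroup generated by $\mu$ is non-elementary, a Furstenberg-type argument (using stationarity of $\nu$ on the compact space $\partial X$, together with non-elementarity ruling out atoms supported on a finite fixed set) shows that $w_n x$ converges almost surely to a point $\xi \in \partial X$, and the hitting measure $\nu$ has no atoms — in particular $\nu$ gives zero mass to any finite set, so $\nu$ is genuinely supported on infinitely many points. The same applies verbatim to $\check\mu$, giving $\eta \in \partial X$ for $\check{\mathbb P}$-a.e. reflected path, with hitting measure $\check\nu$ non-atomic. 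Since $\nu$ and $\check\nu$ are both non-atomic, the diagonal $\{(\eta,\xi) : \eta = \xi\}$ has $\nu\otimes\check\nu$-measure zero, so for $\nu\otimes\check\nu$-a.e. pair we have $\eta \neq \xi$.

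For hypothesis (2), I would take $P(\eta,\xi)$ to be the set of \emph{all} geodesic lines $\gamma:\mathbb R\to X$ such that $\gamma(-\infty) = \eta$ and $\gamma(+\infty) = \xi$ in $\overline X$ — more precisely, the set of lines obtained as limits of geodesic segments $[\eta_n,\xi_n]$ with $\eta_n\to\eta$, $\xi_n\to\xi$ in $X\cup\partial X$. This assignment is manifestly $G$-equivariant since $G$ acts by isometries on $X$ and by homeomorphisms on $\overline X$. The crucial point is that, when $\eta\neq\xi$, this set is non-empty and $D(\eta,\xi) = \sup_{\gamma\in P(\eta,\xi)} d(x,\gamma) < \infty$: non-emptiness and finiteness both come from stable visibility. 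Indeed, pick sequences $\eta_n, \xi_n \in X$ with $\eta_n\to\eta$, $\xi_n\to\xi$, and let $\gamma_n = [\eta_n,\xi_n]$; by stable visibility all $\gamma_n$ meet a common bounded set $B$, so after reparametrizing each $\gamma_n$ so that $\gamma_n(0)\in B$ and using properness (closed balls compact, hence Arzelà–Ascoli on geodesics through a fixed compact set), a subsequence converges uniformly on compacts to a geodesic line $\gamma\in P(\eta,\xi)$, which stays within $\mathrm{diam}(B) + d(x,B)$ of $x$; moreover \emph{every} element of $P(\eta,\xi)$ must meet $B$ by stable visibility applied to a defining sequence, so $D(\eta,\xi)$ is bounded by the same constant. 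Measurability of $D$ would be checked using separability of the space of geodesics and the fact that $\overline X$ is second countable, expressing $D$ as a countable combination of continuous quantities; this is the technical bookkeeping step but presents no real difficulty. With (1) and (2) established, Theorem \ref{mainabs} yields the geodesic ray $\gamma$ with $d(w_n x,\gamma(An))/n\to 0$.

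The last assertion — that $A>0$ when the action has exponentially bounded growth — follows from the contrapositive of the corollary to Theorem \ref{entrocrit}: if $A=0$ then the Poisson boundary is trivial. But non-elementarity of the subgroup generated by $\mu$, combined with the non-atomic hitting measure $\nu$ on $\partial X$ constructed above, exhibits $(\partial X,\nu)$ as a non-trivial $\mu$-boundary (it has more than one point, in fact infinitely many, and the measures $w_n\nu$ converge to $\delta_\xi$ by hypothesis (1)), contradicting triviality. Hence $A>0$. The main obstacle in the whole argument is the verification that $P(\eta,\xi)$ is non-empty and that $D$ is finite $\nu\otimes\check\nu$-almost everywhere — this is precisely where stable visibility and properness are used in an essential way, and where one must be careful that the limiting geodesic actually has its endpoints at $\eta$ and $\xi$ rather than escaping elsewhere; everything else (convergence to the boundary, equivariance, the measurability bookkeeping, and the $A>0$ conclusion) is comparatively routine given the tools assembled earlier in the paper.
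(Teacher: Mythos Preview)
Your proposal is correct and follows essentially the same route as the paper: verify the two hypotheses of Theorem \ref{mainabs} (boundary convergence via a Kaimanovich-type argument from the (CP) and (CS) properties implied by stable visibility, and non-emptiness/boundedness of the pencil $P(\eta,\xi)$ via stable visibility plus Arzel\`a--Ascoli), then invoke the entropy criterion for $A>0$. The only differences are in precision rather than substance: the paper packages the geometric consequences of stable visibility into a separate lemma (projectivity of the action, non-emptiness and boundedness of pencils, and a separation property for triples of boundary points) and cites \cite{Ka00} explicitly for convergence to the boundary, and it gives a full proof of the measurability of $D$ rather than deferring it as bookkeeping.
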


Note that in order to apply Theorem \ref{mainabs} it is necessary to produce a map from $\partial X \times \partial X$ to 
subsets of the set of geodesics $\Gamma X$. In this section, 
given a pair of points $\xi, \eta$ on $\partial X$, we will denote as $P(\xi, \eta)$ the set of geodesic lines $\gamma : \mathbb{R} \to X$ 
such that $\lim_{t \to \infty} \gamma(t) = \xi$ and $\lim_{t \to -\infty} \gamma(t) = \eta$. Such a set will be called a \emph{pencil}.

\begin{lemma} \label{tightprop}
Let $X$ be a proper, geodesic, metric space with a stably visible compactification $\overline{X}$. Then:
\begin{enumerate}
\item the action of $G$ is \emph{projective}: if $g_n x \to \xi \in \partial X$ for some $x \in X$, then $g_n y \to \xi$ for all $y \in X$;
\item as $X$ is proper, then for each $\eta, \xi \in \partial X$ with $\eta \neq \xi$, the pencil $P(\xi, \eta)$ is non-empty;
\item for each pair $(\eta, \xi)$ with $\eta \neq \xi$, there exists a bounded set $B \subseteq X$ 
which intersect all geodesics in $P(\eta, \xi)$;
\item if $\xi_0$, $\xi_1$ and $\xi_2$ are three distinct points of $\partial X$, then there are neighbourhoods $U_0$, $U_1$, $U_2$  
of $\xi_0$, $\xi_1$, $\xi_2$ in $\overline{X}$ such that each geodesic $\gamma$ joining $\eta_1 \in U_1$ and $\eta_2 \in U_2$ is disjoint from $U_0$.

\end{enumerate}

\end{lemma}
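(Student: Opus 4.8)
The plan is to derive all four items from stable visibility (Definition \ref{svdef}) via one recurring device: from any sequence of geodesic segments whose endpoints converge to two \emph{distinct} boundary points, stable visibility produces a single bounded set $B\subseteq X$ meeting all of them, and the elementary fact that a point of a geodesic lying between two points of $B$ is within $\operatorname{diam}(B)$ of each of them then lets one trap further points of the geodesics in bounded sets, contradicting their escape to $\partial X$. I will use freely that $\overline X$ is compact and metrizable and that $X\hookrightarrow\overline X$ is continuous, so $d$-convergence in $X$ forces convergence in $\overline X$. For item (1), assume $g_nx\to\xi\in\partial X$ and, by compactness of $\overline X$, pass to a subsequence with $g_ny\to z$ for some $y\in X$. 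Since $d(g_nx,g_ny)=d(x,y)$ is bounded while $g_nx$ leaves every bounded set, $z\in\partial X$; and if $z\neq\xi$, stable visibility applied to the segments $[g_nx,g_ny]$, each of diameter $d(x,y)$, yields a fixed bounded set within bounded distance of which $g_nx$ must remain, contradicting $g_nx\to\xi$. Hence every subsequential limit of $(g_ny)$ is $\xi$, so $g_ny\to\xi$.

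For item (2), where properness of $X$ is the crucial hypothesis, fix $\xi\neq\eta$ in $\partial X$, choose $x_n\to\xi$ and $y_n\to\eta$ in $X$, and let $\sigma_n=[y_n,x_n]$. By stable visibility all $\sigma_n$ meet a fixed bounded set, so we may reparametrize with $\sigma_n(0)$ in it and, by properness, assume $\sigma_n(0)\to p$; since $x_n,y_n$ escape to $\partial X$ while $\sigma_n(0)$ stays bounded, the domains $[-a_n,b_n]$ satisfy $a_n,b_n\to\infty$, and an Arzel\`a--Ascoli argument (geodesics are $1$-Lipschitz) together with properness gives a subsequence converging uniformly on compacta to a geodesic line $\sigma:\mathbb R\to X$. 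It remains to show $\sigma(t)\to\xi$ as $t\to+\infty$ (the negative end being symmetric): given $t_j\to\infty$, pick a subsequential limit $\zeta=\lim_j\sigma(t_j)\in\partial X$, choose $n_j$ with $b_{n_j}>t_j$ and $\sigma_{n_j}(t_j)$ close to $\sigma(t_j)$ in $\overline X$, so that $\sigma_{n_j}(t_j)\to\zeta$; if $\zeta\neq\xi$, stable visibility applied to the tail segments $\sigma_{n_j}|_{[t_j,b_{n_j}]}$ (endpoints converging to $\zeta$ and $\xi$) gives a fixed bounded set $B'$ met at a parameter $s_j\ge t_j$, whence $d(\sigma_{n_j}(0),\sigma_{n_j}(s_j))=s_j\ge t_j\to\infty$ even though $\sigma_{n_j}(0)\to p$ and $\sigma_{n_j}(s_j)\in B'$, which is impossible. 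Therefore $\zeta=\xi$ and $\sigma\in P(\xi,\eta)$.

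Items (3) and (4) reuse the same cut-and-apply pattern. For (3), if no single bounded set met every geodesic of $P(\eta,\xi)$, one could pick $\gamma_n\in P(\eta,\xi)$ with $d(x,\gamma_n)\to\infty$; choosing parameters $T_n^{\pm}$ so that $\gamma_n(T_n^+)$ and $\gamma_n(-T_n^-)$ lie within $1/n$ of $\xi$ and $\eta$ in $\overline X$, the segments $\gamma_n|_{[-T_n^-,T_n^+]}$ have endpoints converging to $\xi$ and $\eta$, so by stable visibility they all meet one bounded set $B'$, forcing $d(x,\gamma_n)\le d(x,B')$, a contradiction. For (4), if the conclusion failed for $\xi_0,\xi_1,\xi_2$, there would be points $\eta_i^{(n)}\to\xi_i$ $(i=1,2)$ and geodesics $\gamma_n=[\eta_1^{(n)},\eta_2^{(n)}]$ passing through points $z_n\to\xi_0$; splitting $\gamma_n$ at $z_n$ and applying stable visibility to $[\eta_1^{(n)},z_n]$ and to $[z_n,\eta_2^{(n)}]$ gives bounded sets $B_1,B_2$ met at $q_n$ and $r_n$, and since $z_n$ lies between $q_n$ and $r_n$ on $\gamma_n$ we get $d(q_n,z_n)\le d(q_n,r_n)\le\operatorname{diam}(B_1\cup B_2)$, so $z_n$ stays bounded, contradicting $z_n\to\xi_0\in\partial X$.

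The one genuinely delicate point is the end of item (2): a geodesic line of an abstract compactification need not converge to any boundary point, so the endpoints of the Arzel\`a--Ascoli limit cannot simply be read off. The fix is the double limit used above — first send the approximating index $n\to\infty$ with $t_j$ fixed, transferring the point $\sigma(t_j)$ onto the actual segments $\sigma_n$, then send $j\to\infty$ — combined with a fresh application of stable visibility to the tail segments and the bounded-point trapping argument. Everything else is routine, and items (3)--(4) are strictly easier instances of the same mechanism, so I expect no further obstacle there.
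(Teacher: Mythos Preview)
Your proof is correct and follows essentially the same approach as the paper's for all four items: items (1) and (4) match the paper almost verbatim, for item (3) the paper simply writes ``Immediate'' while you spell out the contradiction, and for item (2) you supply the verification that the Arzel\`a--Ascoli limit geodesic actually has endpoints $\xi$ and $\eta$, a point the paper glosses over with the single phrase ``the claim follows by the Ascoli-Arzel\'a theorem.'' Your identification of this last step as the genuinely delicate one is apt, and your tail-segment argument using a second application of stable visibility is a clean way to handle it.
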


\begin{proof}
1. Suppose $g_n x \to \xi \in \partial X$. By properness, $d(g_n x, x)$ is unbounded, and moreover let us note that 
 the distance $d(g_n x, g_n y) = d(x, y)$ is bounded independently of $n$.
Suppose now that there is a (sub)sequence $g_n$ such that $g_n y \to \eta \in \overline{X}$, $\eta \neq \xi$.
Since $g_n y$ is also unbounded in $X$, then $\eta \in \partial X$.   
Now, let us choose for each $n$ a geodesic $\gamma_n$
joining $g_nx$ and $g_n y$. Then by stable visibility there exists a ball $B \subseteq X$ 
which interesects all $\gamma_n$. Since the distance $d(g_n x, g_n y)$ is bounded, the union 
of all  $\gamma_n$ must lie in some ball $B' \subseteq X$, which contradicts the unboundedness of $g_n x$.

2. Let us take a sequence $\xi_n \to \xi$ and $\eta_n \to \eta$, and geodesics $\gamma_n$ joining them. The claim 
follows by the Ascoli-Arzel\'a theorem.

3. Immediate.

4. If the claim is false, then there exist sequences $\xi_{0,n}$, $\xi_{1, n}$ and $\xi_{2, n}$ 
such that $\xi_{i, n} \to \xi_i$ for each $i = 0, 1, 2$, and geodesics $\gamma_n$ 
which join $\xi_{1, n}$ and $\xi_{2, n}$ and such that $\xi_{0, n}$ also belongs to $\gamma_n$:
let us denote as $\gamma_{1, n}$ the part of $\gamma_n$ between $\xi_{1, n}$ and $\xi_{0, n}$, and 
as $\gamma_{2, n}$ the part between $\xi_{0, n}$ and $\xi_{2, n}$.
Then by stable visibility, there exists a ball $B \subseteq X$ which intersects all geodesics $\gamma_{1, n}$ 
and $\gamma_{2, n}$: let us pick a point $\alpha_n \in B \cap \gamma_{1, n}$ and $\beta_n \in B \cap \gamma_{2, n}$.
Then clearly the distance between $\alpha_n$ and $\beta_n$ is bounded, whereas $d(\alpha_n, \xi_{0, n}) \to \infty$
and $d(\beta_n, \xi_{0, n}) \to \infty$, contradicting the fact that $\alpha_n, \xi_{0,n}$ and $\beta_n$ 
lie in that order on the geodesic $\gamma_n$. 

\end{proof}

\begin{lemma} \label{measurable}
Let $X$ be a proper geodesic space with a stably visible compactification. Then for each $x \in X$ 
the function $D : \partial X \times \partial X \to \mathbb{R}$
$$D(\xi, \eta) := \sup_{\gamma \in P(\xi, \eta)} d(x, \gamma)$$
is measurable with respect to the Borel $\sigma$-algebra.
\end{lemma}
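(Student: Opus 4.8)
The plan is to show that $D$ is Borel measurable by exhibiting it as a pointwise limit (or limsup) of functions whose sublevel or superlevel sets are visibly Borel. Since the compactification $\overline{X}$ is metrizable, fix a compatible metric $\rho$ on $\overline{X}$; we will work with $\rho$-neighbourhoods of points of $\partial X$. The key idea is that the quantity $D(\xi,\eta)$ is controlled by nearby pencils: if $\xi',\eta'$ are close to $\xi,\eta$ in $\overline{X}$, then any geodesic line in $P(\xi',\eta')$ is, by stable visibility (Lemma \ref{tightprop}, part 3, applied uniformly), forced to pass near the same bounded set, and one should be able to compare $D(\xi',\eta')$ with $D(\xi,\eta)$ up to a small error that vanishes as $\xi'\to\xi$, $\eta'\to\eta$. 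This would give a form of semicontinuity of $D$, which immediately yields Borel measurability.

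Concretely, first I would prove a \textbf{lower semicontinuity} statement: if $\xi_n\to\xi$ and $\eta_n\to\eta$ (with $\xi\neq\eta$) and $\gamma_n\in P(\xi_n,\eta_n)$ with $d(x,\gamma_n)\to L$, then by properness and Ascoli--Arzel\`a a subsequence of $\gamma_n$ converges uniformly on compacts to a geodesic line $\gamma_\infty$, and one checks using part 1 of Lemma \ref{tightprop} (projectivity/convergence of endpoints) that $\gamma_\infty\in P(\xi,\eta)$; moreover $d(x,\gamma_\infty)\le\liminf d(x,\gamma_n)$. Taking suprema, this shows $D(\xi,\eta)\ge \limsup_n D(\xi_n,\eta_n)$ is \emph{not} quite what drops out directly — rather the clean statement is that $D$ is \emph{upper} semicontinuous on the set $\{\xi\neq\eta\}$: any limit of geodesics with near-optimal $d(x,\cdot)$ survives to a geodesic in the limit pencil, so the limiting pencil is ``at least as spread out.'' An upper semicontinuous real-valued function is Borel (indeed its superlevel sets $\{D \ge c\}$ are closed in $\{\xi \ne \eta\}$), which is exactly the conclusion. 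One must also handle the diagonal $\{\xi=\eta\}$ and the points where $D=\infty$: the diagonal is closed, hence Borel, and setting $D$ arbitrarily there (or noting it has measure zero for the application) does not affect measurability; similarly $\{D=\infty\}$ is Borel.

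There is a subtlety in the upper semicontinuity argument that I expect to be the main obstacle: the supremum defining $D(\xi,\eta)$ is over the whole pencil $P(\xi,\eta)$, which may be an infinite (even uncountable) family of geodesics, so I cannot simply take one near-optimal geodesic $\gamma_n\in P(\xi_n,\eta_n)$ for each $n$ and pass to a limit — I need the limit procedure to be compatible with the supremum. The way around this is a diagonal argument: to show $\limsup_n D(\xi_n,\eta_n)\le D(\xi,\eta)$, pick for each $n$ a geodesic $\gamma_n\in P(\xi_n,\eta_n)$ with $d(x,\gamma_n)\ge D(\xi_n,\eta_n)-1/n$; stable visibility (Lemma \ref{tightprop}, part 3, together with the uniform neighbourhood statement in part 4) guarantees all the $\gamma_n$ meet a fixed bounded set, so after translating the parametrization they live in a compact subset of $\Gamma X$ with the topology of uniform convergence on compacts, and Ascoli--Arzel\`a extracts a convergent subsequence with limit $\gamma_\infty\in P(\xi,\eta)$ satisfying $d(x,\gamma_\infty)=\lim d(x,\gamma_n)=\limsup_n D(\xi_n,\eta_n)$. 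Hence $D(\xi,\eta)\ge d(x,\gamma_\infty)\ge\limsup_n D(\xi_n,\eta_n)$, which is upper semicontinuity. The remaining bookkeeping — that the endpoints of $\gamma_\infty$ are really $\xi$ and $\eta$ rather than some other boundary points, which uses properness to rule out the geodesic fleeing to infinity and uses projectivity — is routine but needs to be stated carefully, and that is where I would spend most of the writing effort.
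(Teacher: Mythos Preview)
Your approach is correct and in fact proves a bit more than the paper does: you establish that $D$ is upper semicontinuous on $(\partial X\times\partial X)\setminus\Delta$, whereas the paper expresses $\{D\ge R\}$ explicitly as a $G_\delta$ set by fixing a countable family of shrinking metric balls $(U_n)$ in $\overline{X}$ and writing
\[
\{D\ge R\}=\bigcap_N\bigcup_{\substack{\min\{m,n\}\ge N\\ (U_n,U_m)\text{ avoids }B(x,R)}}U_n\times U_m.
\]
At heart the two arguments coincide: the nontrivial inclusion in the paper's identity, like your semicontinuity step, comes down to taking geodesic segments whose endpoints converge to $\xi\neq\eta$ while avoiding $B(x,R)$, invoking stable visibility to trap them through a fixed bounded set, and applying Ascoli--Arzel\`a to extract a limiting line in $P(\xi,\eta)$ still avoiding $B(x,R)$. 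Your packaging is cleaner; the paper's is more explicit.

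Two small corrections to your citations. First, the fact that all the $\gamma_n\in P(\xi_n,\eta_n)$ meet a common bounded set is not Lemma~\ref{tightprop}(3) (which is for a \emph{fixed} pair) nor part~(4); it follows by going back to Definition~\ref{svdef} itself, applied to finite subsegments $\gamma_n|_{[-T_n,T_n]}$ with $T_n$ chosen so that the endpoints converge to $\xi,\eta$ in $\overline{X}$. Second, the verification that the Ascoli--Arzel\`a limit $\gamma_\infty$ actually has endpoints $\xi$ and $\eta$ is not ``projectivity'' in the sense of Lemma~\ref{tightprop}(1), which concerns $G$-orbits; rather it is the same stable-visibility contradiction used \emph{in the proof} of~(1) and~(2): if $\gamma_\infty(t_k)\to\zeta\neq\xi$, then subsegments of the $\gamma_n$ from near $\zeta$ to near $\xi$ would have to meet a fixed bounded set while simultaneously escaping every bounded set. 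This is exactly the ``routine bookkeeping'' you flagged, and the paper glosses over it equally briefly in the proof of Lemma~\ref{tightprop}(2).
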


\begin{proof}
Recall $\overline{X}$ is a second-countable compact Hausdorff space, hence metrizable, and $\partial X$ is a closed subset, hence compact. 
Pick a metric $\tilde{d}$ on $\overline{X}$, and for each integer $k \geq 1$ pick a cover of $\partial X$ made of finitely many $\tilde{d}$-balls of 
radius $\frac{1}{k}$ and centers on $\partial X$.
Taking the union over all $k$ yields a countable sequence $( U_n )_{n \in \mathbb{N}}$ of open sets in $\overline{X}$ with the following properties:

\begin{enumerate} 
\item the $U_n \cap \partial X$ are a base for the topology of $\partial X$;
\item for each $R$, only finitely many $U_n$ intersect the ball $B(x, R)$ in $X$;
\item for each sequence $n_k \to \infty$, the intersection $\bigcap_{k=1}^\infty U_{n_k}$ contains at most one point.
\end{enumerate}
Let us now fix some $R > 0$, and say that a pair $(U, V)$ of open sets in $\overline{X}$ \emph{avoids the ball of radius $R$} if 
there is a point $u \in U \cap X$, a point $v \in V \cap X$ and a geodesic segment $\gamma$ 
joining $u$ to $v$ which does not intersect the ball $B(x, R)$.  Let us define the collection
$\mathcal{S} := \{ (U_n, U_m) \ : \ (U_n, U_m) \textup{ avoids the ball of radius }R \}$
which is a countable collection of pairs of open sets. 
We claim that the following identity holds
$$\{ (\eta, \xi) \in \partial X \times \partial X \ : \ D(\eta, \xi) \geq R \} = \ \bigcap_{N} \bigcup_{\stackrel{\min\{m, n \} \geq N}{(U_n, U_m) \in \mathcal{S}} } U_n \times U_m$$
which implies $D$ is measurable.
Indeed, one inclusion is trivial: if $\xi$ and $\eta$ are joined by a geodesic avoiding the ball of radius $R$, however we choose neighbourhoods $U$ and $V$ of $\xi$ and $\eta$ respectively, 
then the pair $(U, V)$ avoids the ball of radius $R$. On the other hand, suppose $(\eta, \xi)$ belongs 
to the intersection of a sequence $U_{a_k} \times U_{b_k}$ 
of pairs of open sets avoiding the ball of radius $R$: 
by definition, there is a sequence $\gamma_k$ of geodesics joining a point $\xi_k$ of $U_{a_k}$ to a point $\eta_k$ of $U_{b_k}$
and avoiding the ball of radius $R$. By stable visibility, all geodesics $\gamma_k$ must intersect some ball $B(x, R')$, hence 
by properness and the Ascoli-Arzel\'a theorem there exists a geodesic line $\gamma$ which joins $\xi$ and $\eta$, avoiding $B(x, R)$.
\end{proof}

\begin{proof}[Proof of Theorem \ref{tighttracking}]
Conditions 1. and 4. of Lemma \ref{tightprop} correspond to conditions (CP) and (CS) of Kaimanovich \cite{Ka00}. As a consequence of Theorem 2.4 in 
\cite{Ka00}, if the compactification is non-trivial and the group generated by the support of $\mu$ is non-elementary, then $\mathbb{P}$-a.e. 
sample path converges to a point in $\partial X$, and so does $\check{\mathbb{P}}$-a.e. backward sample path. Moreover, the limit measures 
$\nu$ and $\check{\nu}$ are non-atomic (hence the diagonal in $\partial X \times \partial X$ has zero measure). Moreover, 
by Lemma \ref{tightprop}.3 and Lemma \ref{measurable} the function $D(\xi, \eta) := \sup_{\gamma \in P(\xi, \eta)} d(x, \gamma)$, 
where $P(\eta, \xi)$ is the pencil of geodesics joining $\eta$ and $\xi$, is a.e. finite and measurable, hence we can apply Theorem \ref{mainabs}
and get the main claim. Finally, since $\nu$ is non-atomic, the Poisson boundary of the walk is non-trivial. 
As a consequence, if the action has exponentially bounded growth, then by the entropy criterion (Theorem \ref{entrocrit})
the rate of escape $A$ cannot be zero.
\end{proof}








\section{Applications}

\subsection{Gromov hyperbolic spaces} \label{Ghyp}

The first setting where we apply our technique is in Gromov hyperbolic spaces: we treat it first mainly 
because of its simplicity.
Let $X$ be a geodesic metric space, and fix $\delta \geq 0$. Let us recall that $X$ is called \emph{ $\delta$-hyperbolic}
if geodesic triangles are $\delta$-thin, which means that given any three points $x, y, z \in X$, the geodesic $[x, y]$ 
lies in a $\delta$-neighbourhood of the union $[x, z] \cup [z, y]$.
The \emph{Gromov product} of $y$ and $z$ with respect to $x$ is defined as 
$$(y, z)_x := \frac{1}{2}(d(x, y) + d(x, z) - d(y, z)).$$

A $\delta$-hyperbolic space is naturally endowed with the \emph{hyperbolic boundary} $\partial X$: 
\begin{definition} 
The \emph{hyperbolic boundary} $\partial X$ of $X$ is 
the set of sequences $(x_n) \subseteq X$ such that 
$$\liminf_{n, m \to \infty} (x_n, x_m)_x = \infty$$
modulo the equivalence relation $ (x_n ) \sim (y_n)$
if
$$ \liminf_{m,n \to \infty} (x_n, y_m)_x = \infty.$$
\end{definition}

Now, if $X$ is proper, then $\overline{X} := X \cup \partial X$ can be given a second-countable, Hausdorff topology
in such a way that $\overline{X}$ is compact (we refer to \cite{BH} for background material). Moreover, $\partial X$ coincides 
with the \emph{visual compactification} $\partial_v X$
given by equivalence classes of geodesic rays, where two rays are identified if their distance stays bounded:
$$\partial_v X := \{ \gamma : [0, \infty) \to X \textup{ geodesic ray } \}/\sim$$
with $\gamma_1 \sim \gamma_2$ if $\sup_t d(\gamma_1(t), \gamma_2(t)) < \infty$. Moreover, the action of $G$ by isometries 
extends to an action on $\overline{X}$ by homeomorphisms. Let us check stable visibility:

\begin{lemma}
The hyperbolic compactification of a proper, $\delta$-hyperbolic space $X$ is stably visible.
\end{lemma}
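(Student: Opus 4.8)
The plan is to reduce stable visibility to the classical estimate relating the distance from the basepoint to a geodesic to a Gromov product. So let $\gamma_n = [\eta_n, \xi_n]$ be geodesic segments with endpoints $\eta_n, \xi_n \in X$, with $\xi_n \to \xi \in \partial X$ and $\eta_n \to \eta \in \partial X$, $\eta \neq \xi$. I want to exhibit a bounded subset of $X$ meeting every $\gamma_n$, and in fact I will show $\sup_n d(x, \gamma_n) < \infty$, so that a single closed ball about $x$ does the job.

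First I would bound the Gromov products $(\eta_n, \xi_n)_x$ uniformly in $n$. Suppose not; passing to a subsequence, $(\eta_{n_k}, \xi_{n_k})_x \to \infty$. Since the $\xi_{n_k}$ and $\eta_{n_k}$ lie in $X$ and converge in $\overline{X}$ to the boundary points $\xi$ and $\eta$, they are Gromov sequences, so $(\xi_{n_k}, \xi_{n_l})_x \to \infty$ and $(\eta_{n_k}, \eta_{n_l})_x \to \infty$ as $k,l \to \infty$. Applying twice the inequality $(a,c)_x \ge \min\{(a,b)_x, (b,c)_x\} - \delta'$, valid with $\delta'$ depending only on $\delta$, one gets $(\xi_{n_k}, \eta_{n_l})_x \to \infty$ as $k,l \to \infty$; hence the interleaved sequence $\xi_{n_1}, \eta_{n_1}, \xi_{n_2}, \eta_{n_2}, \dots$ is a Gromov sequence, and, containing subsequences representing both $\xi$ and $\eta$, it forces $\xi = \eta$ — a contradiction. (This is just the familiar fact that distinct points of $\partial X$ have finite Gromov product, together with the way the topology of $\overline{X}$ interacts with Gromov products; see \cite{BH}.) So there is $M < \infty$ with $(\eta_n, \xi_n)_x \le M$ for all $n$.

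The second ingredient is the standard inequality $d(x, [p,q]) \le (p,q)_x + C\delta$, valid in any $\delta$-hyperbolic geodesic space with a universal constant $C$: it follows by taking the internal point of the geodesic triangle $x, p, q$ lying on the side $[p,q]$ and using $\delta$-thinness to displace it an $O(\delta)$ amount onto $[x,p]$ (see \cite{BH}). Applying this with $p = \eta_n$ and $q = \xi_n$, together with the bound from the first step, yields $d(x, \gamma_n) \le M + C\delta$ for every $n$, so $B := \overline{B}(x, M + C\delta)$ is the required bounded set and stable visibility follows. I do not anticipate a genuine obstacle; the only slightly delicate point is the uniform bound of the first step — ruling out Gromov products that blow up along a subsequence — and that is exactly what the interleaving argument above provides.
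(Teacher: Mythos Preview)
Your proof is correct and follows essentially the same approach as the paper: both argue that $(\eta_n,\xi_n)_x$ is bounded because $\eta\neq\xi$, and then invoke the standard inequality $d(x,[p,q])\le (p,q)_x + O(\delta)$ to conclude. The only difference is that you spell out the boundedness of the Gromov products via an explicit interleaving argument, whereas the paper simply asserts it as a known property of the hyperbolic boundary.
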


\begin{proof}
Pick two sequences $\xi_n \to \xi$ and $\eta_n \to \eta$. Since $\eta \neq \xi$, the Gromov product $(\xi_n, \eta_n)_x$ 
is bounded. The claim now follows from the 
fact that in a $\delta$-hyperbolic space the Gromov product approximates the distance to the geodesic, namely 
for any three points $x, y, z \in X$ the following inequality holds:
$$(y, z)_x \leq d(x, [y, z]) \leq (y, z)_x + 2 \delta$$
where $[y,z]$ is a geodesic joining $y$ and $z$.
\end{proof}

By the results of the previous section we can thus infer the following sublinear tracking:

\begin{theorem}
Let $G$ be a countable group of isometries of a proper, geodesic $\delta$-hyperbolic space $(X, d)$, 
such that the hyperbolic  boundary $\partial X$ contains at least three points. Moreover, 
let $\mu$ be a probability measure on $G$ with finite 
first moment and such that the group generated by its support is non-elementary. 
Then there exists $A \geq 0$ such that for any $x \in X$, almost every sample path $(w_n x)$ 
converges to some point in $\partial X$, and there exists a geodesic ray $\gamma : [0, \infty) \to X$ with 
$\gamma(0) = x$ such that 
$$\lim_{n \to \infty} \frac{d(w_n x, \gamma(An))}{n} = 0.$$ 
\end{theorem}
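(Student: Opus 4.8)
The plan is to deduce the statement directly from Theorem \ref{tighttracking}, by verifying that the hyperbolic compactification $\overline{X} = X \cup \partial X$ satisfies all of its hypotheses. First I would note that $X$ is proper and geodesic by assumption, and that $\overline{X}$ is a compactification in the sense of the paper: it is second-countable, Hausdorff and compact (standard facts about the hyperbolic boundary of a proper $\delta$-hyperbolic space, see \cite{BH}), and the isometric action of $G$ on $X$ extends to an action by homeomorphisms on $\overline{X}$. The hypothesis that $\partial X$ contains at least three points is exactly the non-triviality of the compactification, and the hypotheses on $\mu$ (finite first moment, non-elementary support) are carried over verbatim.

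The only remaining point is stable visibility of $\overline{X}$, which is precisely the content of the preceding Lemma. Thus all hypotheses of Theorem \ref{tighttracking} are met, and I would invoke it to obtain $A \geq 0$ and, for almost every sample path, a geodesic ray $\gamma$ with the sublinear tracking property $\lim_{n\to\infty} d(w_n x, \gamma(An))/n = 0$. The almost sure convergence of $(w_n x)$ to a point of $\partial X$ also follows, since it is established inside the proof of Theorem \ref{tighttracking} (via condition 1.\ of Lemma \ref{tightprop} and Theorem 2.4 of \cite{Ka00}).

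It remains to arrange $\gamma(0) = x$: the ray produced by Theorem \ref{mainabs} is a sub-ray of some geodesic line in a pencil $P(\eta,\xi)$, so a priori it does not start at $x$. To fix the basepoint I would replace $\gamma$ by a geodesic ray $\gamma'$ from $x$ to the endpoint $\xi = \lim_n w_n x \in \partial X$ (such a ray exists by properness and Ascoli--Arzel\`a). Since both $\gamma$ and $\gamma'$ converge to $\xi$, and in a $\delta$-hyperbolic space two geodesic rays with the same endpoint at infinity are within bounded Hausdorff distance of each other after suitable reparametrization, the tracking estimate transfers from $\gamma$ to $\gamma'$ up to a bounded (hence sublinear) error, with the same constant $A$.

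The steps here are essentially routine given the machinery already developed; the one point requiring a little care is the last paragraph, i.e.\ moving the basepoint of the approximating ray to $x$ while preserving the sublinear estimate. This uses the hyperbolicity of $X$ in an essential way (fellow-traveling of asymptotic rays), but presents no real obstacle. Everything else is a direct citation of Theorem \ref{tighttracking} and the stable visibility lemma.
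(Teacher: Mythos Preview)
Your proposal is correct and follows essentially the same route as the paper: apply Theorem \ref{tighttracking} using the stable visibility lemma for the hyperbolic compactification, then move the basepoint of the tracking ray to $x$ via the fellow-traveling of asymptotic rays in a $\delta$-hyperbolic space. The only cosmetic difference is that the paper takes $\gamma'$ to have the same endpoint as $\gamma$ (namely $\lim_{t\to\infty}\gamma(t)$) rather than identifying this endpoint with $\lim_n w_n x$ as you do; these agree, but the paper's formulation avoids having to argue that.
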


\begin{proof}
By Theorem \ref{tighttracking}, for almost every sample path $(w_n x)$  there exist a geodesic ray $\gamma : [0, \infty) \to X$
which tracks the sample path sublinearly. Such a geodesic need not pass through $x$: however, since the hyperbolic boundary 
and the visual boundary coincide, there exists a geodesic ray $\gamma'$ such that $\gamma'(0) = x$ and 
$\lim_{t \to \infty} \gamma'(t) = \eta = \lim_{t \to \infty} \gamma(t)$ and such geodesic lies within bounded 
Hausdorff distance of $\gamma$, yielding the result. 
\end{proof}

A different proof of sublinear tracking on word hyperbolic groups is already contained in \cite{Ka94} (and attributed to T. Delzant), 
where it is used to prove that the hyperbolic compactification coincides with the Poisson boundary of the walk.

\subsection{Groups with non-trivial Floyd boundary} \label{floyd}

Let $G$ be a finitely generated group, and let us denote as $\Gamma = \Gamma(G, S)$ the Cayley graph 
relative to a generating set $S$. $\Gamma$ is a proper, geodesic metric space with respect to the word metric 
given by $S$. Several compactifications of $\Gamma$ have been studied, 
starting with the \emph{end compactification} $\mathcal{E}(G)$ of Freudenthal and Hopf \cite{Hopf}. 
It is not hard to check that the end compactification is stably visible according to the definition of section \ref{tightness}, 
hence we can apply theorem \ref{tighttracking} and get sublinear tracking for random walks on the Cayley 
graph, as long as the group generated by the support of $\mu$ is non-elementary with respect to $\mathcal{E}(G)$
(convergence to the boundary for groups with infinitely many ends is due to Woess \cite{Woe93}).

However, several interesting groups (for instance fundamental groups of compact hyperbolic surfaces) turn out to have 
trivial end compactification. 
In 1980, W. Floyd \cite{Fl} introduced a finer compactification of the Cayley graph of a finitely generated group, 
and applied it to the study of Kleinian groups. Let us now recall Floyd's construction.

Let $F$ be a summable, decreasing function $F : \mathbb{N} \to \mathbb{R}^+$ such that given $k \in \mathbb{N}$ 
there exist $M, N > 0$ so that 
$$MF(r) \leq F(kr) \leq NF(r) \qquad \forall r.$$
Let us now define a new metric on $\Gamma(G, S)$: namely, let us set the length of the edge between vertices $a, b \in G$
to be $\min\{ F(|a|), F(|b|) \}$, and let us extend it to a metric $d_F$ on $\Gamma$ by taking shortest paths.
The \emph{Floyd compactification} is the completion of $\Gamma(G, S)$ with respect to $d_F$. The complement 
of $\Gamma$ in the completion will be called a \emph{Floyd boundary} $\partial_F G$ (note it depends 
on the choice of $F$). A Floyd boundary is called \emph{non-trivial} if it contains at least three points 
(which implies it must contain infinitely many of them). As long as such boundary is non-trivial, we have sublinear tracking:

\begin{theorem} \label{floydtracking}
Let $G$ be a finitely generated group, $S$ a generating set and let $\Gamma= \Gamma(G, S)$ be
its Cayley graph, with the associated word metric $d$. Suppose $G$ has non-trivial Floyd boundary $\partial_F G$
with respect to some scaling function $F$, 
and let $\mu$ be a probability measure on $G$
with finite first moment and such that the group generated by the support of $\mu$ is non-elementary. 
Then there exists $A > 0$ such that 
for almost every sample path $(w_n)$ there exists a geodesic ray $\gamma : [0, \infty) \to \Gamma$
such that 
$$\lim_{n \to \infty} \frac{d(w_n, \gamma(An))}{n} = 0.$$
\end{theorem}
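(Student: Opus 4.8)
The plan is to deduce Theorem \ref{floydtracking} from the abstract criterion of Theorem \ref{tighttracking} (equivalently Theorem \ref{mainabs}), by verifying that the Floyd compactification $\overline{\Gamma}^F = \Gamma \cup \partial_F G$ satisfies the hypotheses there: it is a non-trivial compactification of a proper, geodesic metric space, and it is stably visible in the sense of Definition \ref{svdef}. First I would record that $\Gamma(G,S)$ with the word metric is a proper geodesic space (closed balls are finite), and that since $G$ is infinite the Floyd completion is a compactification with $\partial_F G$ compact; non-triviality is an explicit hypothesis. The group $G$ acts on itself by left translations, and one checks that left translations are $d_F$-coarsely-Lipschitz in a way that extends continuously to $\partial_F G$ — this is Floyd's observation that $g$ acts on $\overline{\Gamma}^F$ by homeomorphisms (the scaling condition $MF(r)\le F(kr)\le NF(r)$ is exactly what makes the translated metric bi-Lipschitz-at-infinity-comparable to the original, so the identity $(\Gamma,d_F)\to(\Gamma, g_* d_F)$ is bi-Lipschitz and hence extends to the completions).

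The core step is stable visibility. I would argue as follows: suppose $\gamma_n = [\eta_n,\xi_n]$ is a sequence of geodesic segments (in the \emph{word} metric) with $\xi_n \to \xi$ and $\eta_n\to\eta$ in $\overline{\Gamma}^F$, $\xi\neq\eta$. The key geometric fact, due to Floyd, is that a word-geodesic segment whose endpoints are both far (in word metric) from the basepoint $e$ has small $d_F$-length — more precisely, a geodesic ray or segment escaping to infinity has finite $d_F$-length controlled by $\sum_{r\ge R} F(r)$, which tends to $0$ as $R\to\infty$. Hence if every $\gamma_n$ stayed outside a large ball $B(e,R)$ in the word metric, each $\gamma_n$ would have $d_F$-diameter at most $\varepsilon(R) := 2\sum_{r\ge R}F(r)$, and since $\xi_n\in\gamma_n$ has an endpoint... more carefully: if $\gamma_n\cap B(e,R)=\emptyset$ then the two endpoints $\eta_n,\xi_n$ lie at $d_F$-distance $\le \varepsilon(R)$ from each other, so passing to the limit $d_F(\eta,\xi)\le\varepsilon(R)$. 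Since $R$ is arbitrary and $\varepsilon(R)\to 0$, we would get $\eta=\xi$, a contradiction. Therefore for some fixed $R$ (depending on $d_F(\eta,\xi)>0$) every $\gamma_n$ meets $B(e,R)$, which is a bounded set in the word metric — this is exactly stable visibility. A small technical point to handle: $\xi_n,\eta_n$ are vertices of $\Gamma$ (endpoints of geodesic segments), so "$\xi_n\to\xi\in\partial_F G$" means they $d_F$-converge, and the above estimate using the triangle inequality in $d_F$ is clean.

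Once stable visibility is established, Theorem \ref{tighttracking} applies verbatim: there is $A\ge 0$ such that for a.e. sample path $(w_n)$ there is a geodesic ray $\gamma:[0,\infty)\to\Gamma$ with $d(w_n,\gamma(An))/n\to 0$. To get $A>0$ as claimed in the statement, I would invoke the last sentence of Theorem \ref{tighttracking}: it suffices that the action of $G$ on $\Gamma$ has exponentially bounded growth, and indeed for a finitely generated group acting on its own Cayley graph $\#\{g : d(e,ge)\le R\} = \#B(e,R) \le (2|S|)^R = e^{(\log 2|S|)R}$, so growth is (at most) exponential. Combined with non-atomicity of the hitting measure $\nu$ (which Theorem \ref{tighttracking}'s proof already extracts from Kaimanovich's convergence theorem via conditions (CP), (CS), i.e. Lemma \ref{tightprop}.1 and Lemma \ref{tightprop}.4), the entropy criterion Theorem \ref{entrocrit} forces $A>0$.

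I expect the main obstacle to be the verification that the $G$-action extends to homeomorphisms of $\overline{\Gamma}^F$ and, relatedly, the precise form of the escaping-geodesic estimate — both are "known to experts" facts from Floyd's paper and subsequent work (Karlsson, Gerasimov), but need to be stated carefully enough to feed into Definition \ref{svdef}; in particular one must be slightly careful that the relevant geodesics in the stable visibility condition are \emph{word}-geodesics, not $d_F$-geodesics, since Theorem \ref{tighttracking} is about the original metric $d$ on $X=\Gamma$. Everything else is a routine application of the machinery already set up: non-triviality and finite first moment and non-elementarity are hypotheses, properness and exponential growth are automatic for Cayley graphs, and the conclusion is read off from Theorem \ref{tighttracking}.
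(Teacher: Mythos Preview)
Your proposal is correct and follows essentially the same route as the paper: verify stable visibility of the Floyd compactification via the Floyd--Karlsson length estimate for word-geodesics, then apply Theorem \ref{tighttracking}, and finally use exponential growth of the Cayley graph to get $A>0$. The paper quotes the precise form of the key estimate as Lemma \ref{floydlemma} (Karlsson): $d_F(z,w)\le 4rF(r)+2\sum_{j\ge r}F(j)$ where $r=d(e,[z,w])$; your rough bound $2\sum_{r\ge R}F(r)$ omits the $4rF(r)$ term (which is needed, since a word-geodesic may linger for $\sim 2r$ steps near its closest point to $e$), but since $rF(r)\to 0$ for decreasing summable $F$ this does not affect the conclusion, and you already flagged the exact form of the estimate as the point requiring care.
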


The proof of the theorem is based on the fact that the Floyd compactification is stably visible, which follows 
from the following lemma of A. Karlsson \cite{Kar03} (which is used to identify the Floyd and Poisson boundaries):

\begin{lemma} \label{floydlemma}
Let $z$ and $w$ be two points in $\Gamma$ and let $[z, w]$ be a geodesic segment connecting $z$ and $w$. Then 
$$d_F(z, w) \leq 4 r F(r) + 2 \sum_{j = r}^\infty F(j)$$
where $r = d(e, [z, w])$.
\end{lemma}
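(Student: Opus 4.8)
The plan is to prove Lemma \ref{floydlemma}, which bounds the Floyd distance between the endpoints of a geodesic segment in terms of how far that geodesic stays from the identity. The key geometric observation is that the Floyd metric $d_F$ is obtained from the word metric by rescaling each edge near a vertex $v$ by roughly $F(|v|)$, so any path that is forced to travel ``far out'' — i.e.\ through vertices of large word-length — is cheap in the $d_F$-metric, while the only way a path can stay cheap near $e$ is to be short. I would first reduce the statement to estimating the $d_F$-length of the given geodesic segment $[z,w]$ itself, since $d_F(z,w)$ is by definition the infimum of $d_F$-lengths of paths from $z$ to $w$, and $[z,w]$ is one such path.

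First I would set $r := d(e,[z,w])$ and pick a vertex $p$ on $[z,w]$ with $|p| = r$. Split $[z,w]$ at $p$ into two geodesic subsegments $[z,p]$ and $[p,w]$; by symmetry it suffices to bound the $d_F$-length of each. Along $[z,p]$, parametrize the vertices $p = p_0, p_1, p_2, \dots$ outward from $p$ toward $z$ (so $p_j$ is the $j$-th vertex); since this is a geodesic in the word metric and $|p_0| = r$ is the minimal distance from $e$ to the whole segment, one has the crucial lower bound $|p_j| \geq \max\{r,\, j\}$ — indeed $|p_j| \geq r$ because every point of $[z,w]$ is at distance $\geq r$ from $e$, and $|p_j| \geq |p_0| - j$... wait, the right bound: actually since $[z,p]$ is geodesic and $p_0$ realizes the minimum, the word-lengths $|p_j|$ are nondecreasing along the segment away from the closest point (this needs a short argument using that otherwise one could shortcut). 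Then the $d_F$-length of the edge $[p_j, p_{j+1}]$ is $\min\{F(|p_j|), F(|p_{j+1}|)\} \le F(\max\{r,j\})$ since $F$ is decreasing. Summing over $j \ge 0$ gives a contribution at most $\sum_{j=0}^{r} F(r) + \sum_{j=r}^\infty F(j) = (r+1)F(r) + \sum_{j=r}^\infty F(j)$, and doubling for the two halves and being slightly lossy with constants yields the stated $4rF(r) + 2\sum_{j=r}^\infty F(j)$.

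The step I expect to be the main obstacle is establishing that the word-lengths $|p_j|$ along a word-geodesic are bounded below by $\max\{r, j\}$ (up to harmless additive constants), i.e.\ that a geodesic segment cannot dip back toward $e$ after leaving its closest point and cannot wander at bounded distance from $e$ for a long stretch. The first part (monotonicity away from the closest point $p$) follows because if $|p_j| < |p_{j-1}|$ for some $j$ on the $z$-side then one could replace the initial part of the geodesic by one passing closer to $e$, contradicting minimality of $r$ — more carefully, $|p_j| \geq |p_0| = r$ always, and the triangle inequality $|p_j| \ge |p_k| - |j-k|$ combined with geodesy forces $|p_j|$ to grow at unit speed once it exceeds $r$. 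Handling the edge lengths that involve the non-vertex point $p$ itself (if $z,w$ are not vertices) and collapsing the two one-sided sums into the symmetric bound in the statement are routine bookkeeping; the constant $4$ rather than $2$ in front of $rF(r)$ gives exactly the slack needed to absorb these boundary effects.

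Once Lemma \ref{floydlemma} is in hand, stable visibility of the Floyd compactification follows immediately: if $\gamma_n = [\eta_n,\xi_n]$ with $\xi_n \to \xi$ and $\eta_n \to \eta$ for distinct boundary points $\eta \ne \xi$, then $d_F(\eta_n,\xi_n) \to d_F(\eta,\xi) > 0$, so the right-hand side $4 r_n F(r_n) + 2\sum_{j = r_n}^\infty F(j)$ stays bounded below by a positive constant; since $F$ is summable and decreasing, $r F(r) + \sum_{j\ge r} F(j) \to 0$ as $r \to \infty$, forcing $r_n = d(e,\gamma_n)$ to remain bounded, which is exactly the stable visibility condition of Definition \ref{svdef}. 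Theorem \ref{floydtracking} then follows by applying Theorem \ref{tighttracking}, noting that a finitely generated group acting on its own Cayley graph has exponentially bounded growth (the number of group elements in a ball of radius $R$ is at most $(\#S)^R = e^{R\log\#S}$), which upgrades the conclusion to $A > 0$.
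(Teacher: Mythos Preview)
The paper does not prove this lemma; it is quoted from Karlsson \cite{Kar03} and used as a black box. Your overall strategy---bound $d_F(z,w)$ by the Floyd length of the word-geodesic $[z,w]$ itself, split at a closest point $p$ to $e$, and estimate each half---is exactly the standard one and is correct in outline.

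However, your key inequality $|p_j| \geq \max\{r, j\}$ is false, and the justification you offer for it (that $|p_j|$ is nondecreasing as one moves away from the closest point, because ``otherwise one could shortcut'') does not hold in general Cayley graphs. For a concrete counterexample, take $G = \mathbb{Z}^2$ with the standard generators and the geodesic
\[
(3,0),\ (2,0),\ (2,1),\ (1,1),\ (1,2),\ (0,2),\ (0,3)
\]
from $z=(3,0)$ to $w=(0,3)$. The word lengths along this geodesic are $3,2,3,2,3,2,3$, so $r=2$; taking $p=(2,0)$ and walking toward $w$ one finds $p_4=(0,2)$ with $|p_4|=2 < 4$, so neither monotonicity nor $|p_j|\ge j$ holds.

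The fix is immediate and requires no cleverness: the triangle inequality $j = d(p_0,p_j) \le |p_0| + |p_j| = r + |p_j|$ gives
\[
|p_j| \ \ge\ \max\{r,\ j-r\}.
\]
With this correct bound you split the sum at $j=2r$ rather than at $j=r$: the first $2r$ edges each have Floyd length at most $F(r)$, contributing $2rF(r)$; for $j\ge 2r$ the edge weight is at most $F(j-r)$, and $\sum_{j\ge 2r} F(j-r)=\sum_{k\ge r}F(k)$. Doubling for the two halves yields exactly $4rF(r)+2\sum_{j=r}^\infty F(j)$. In particular the constant $4$ is not slack to absorb ``boundary effects'' as you suggest---it is forced by the weaker (but true) lower bound $j-r$ in place of $j$.

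Your deduction of stable visibility and of Theorem \ref{floydtracking} from the lemma is correct and matches the paper's argument.
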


\begin{proof}[Proof of Theorem \ref{floydtracking}]
It is enough to check stable visibility: indeed, let $\xi_n \to \xi \in \partial_F X$ and $\eta_n \to \eta \in \partial_F X$, with $\xi \neq \eta$. 
Then $d_F(\eta, \xi ) > 0$, hence by lemma \ref{floydlemma} and summability of $F$, the set of values $r_n := d(e, [\xi_n, \eta_n])$ is 
bounded. The claim now follows by Theorem \ref{tighttracking} (note that the action has exponentially bounded growth
because $G$ is finitely generated).
\end{proof}

Since there is a natural surjection from the Floyd boundary onto the space of ends $\mathcal{E}(G)$, groups with infinitely many ends 
 also have non-trivial Floyd boundary, hence the previous theorem applies. 

The theorem also applies to Kleinian groups: indeed, Floyd \cite{Fl} constructed a continuous 
surjection from the boundary of a geometrically finite Kleinian group onto its limit set, 
and more recently the same result was extended by Mahan Mj \cite{MM} to arbitrary finitely generated Kleinian groups.
As a consequence, Theorem \ref{floydtracking} yields sublinear tracking for random walks on a non-elementary, 
finitely generated Kleinian group.

Finally, Gerasimov \cite{Ge} recently constructed the Floyd map for general relatively hyperbolic groups. Namely, 
he proved for any relatively hyperbolic group $G$ that there exists a surjection from some Floyd boundary of $G$ 
(defined using an exponential scaling function) onto the Bowditch boundary. Thus, non-elementary relatively hyperbolic groups 
have non-trivial Floyd boundary, hence we can apply our theorem and obtain sublinear tracking in the word metric 
on $G$.

\subsection{Teichm\"uller space}

Let now $S$ be a closed surface of genus $g \geq 1$, and $X = \mathcal{T}(S)$ the Teichm\"uller space 
of $S$, endowed with the \emph{Teichm\"uller metric} $d_T$.  The \emph{mapping class group} 
$$Mod(S) := \textup{Diff}^+(S)/\textup{Diff}_0(S)$$
of orientation-preserving diffeomorphisms of $S$ modulo isotopy acts on $\mathcal{T}(S)$, and the quotient is the 
\emph{moduli space} of Riemann surfaces of genus $g$. 

A vast area of research has addressed the question of what properties $\mathcal{T}(S)$ shares with spaces of negative curvature.
Masur \cite{Mas75} showed that Teichm\"uller space is not a CAT(0) space, and not even Busemann non-positively curved.
Moreover, it is not Gromov hyperbolic (Masur and Wolf \cite{MW}); Minsky \cite{Mi} also proved that near the cusp Teichm\"uller metric 
can be modeled on a sup metric of a product of lower dimensional spaces, which is also in contrast 
with negative curvature geometry. For a survey on the geometric properties of the Teichm\"uller metric, we refer to \cite{Mas09}.

In terms of random walks on $Mod(S)$, Kaimanovich and Masur \cite{KM} proved that almost every sample path converges
to some point in the Thurston compactification, and identified the Poisson boundary of the walk with the set $\mathcal{UE}$ 
of uniquely ergodic projective measured foliations. Duchin \cite{Du} proved that, for walks with finite first moment, the random walk tracks 
Teichm\"uller geodesics sublinearly along subsequences of times for which the geodesic lies in the thick part of moduli space. 

Our technique yields sublinear tracking for random walks with finite first moment without any restriction:

\begin{theorem} \label{teichtracking}
Let $\mu$ be a distribution on $Mod(S)$ with finite first moment, whose support generates a non-elementary group.
Then, there exists $A > 0$ such that for each $x \in \mathcal{T}(S)$ and for almost all sample paths $(w_n x)$ 
there exists a Teichm\"uller geodesic ray $\gamma$ which passes through $x$ and such that 
$$\lim_{n \to \infty} \frac{d_T(w_n x, \gamma(An))}{n} = 0.$$ 
\end{theorem}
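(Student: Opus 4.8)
The plan is to deduce Theorem \ref{teichtracking} from the abstract criterion, Theorem \ref{mainabs}, by exhibiting a suitable bordification of $\mathcal{T}(S)$ together with a $G$-equivariant assignment of geodesics to pairs of boundary points. Since Teichm\"uller space is neither Gromov hyperbolic nor CAT(0), we cannot use the stable visibility framework of Theorem \ref{tighttracking} directly on the whole Thurston boundary; instead the natural move is to take $\overline{X} := \mathcal{T}(S) \cup \mathcal{PMF}$ the Thurston compactification, on which $Mod(S)$ acts by homeomorphisms, and to work over the full-measure subset $\mathcal{UE} \subseteq \mathcal{PMF}$ of uniquely ergodic foliations. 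The key geometric input is the theorem of Masur: if $\eta$ and $\xi$ are distinct points of $\mathcal{PMF}$ and at least one of them is uniquely ergodic, then there is a Teichm\"uller geodesic line with vertical foliation $\xi$ and horizontal foliation $\eta$, and (by a compactness argument for uniquely ergodic endpoints) the set of such geodesics passing through a prescribed region is controlled. Thus we set $P(\eta, \xi)$ to be the set of Teichm\"uller geodesic lines from $\eta$ to $\xi$; this is $Mod(S)$-equivariant because the mapping class group acts on $\mathcal{PMF}$ compatibly with its action on geodesics.

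The steps, in order, are as follows. First I would invoke Kaimanovich--Masur \cite{KM}: since $\mu$ has non-elementary support, $\mathbb{P}$-a.e. forward sample path $(w_n x)$ converges to a point $\xi \in \mathcal{PMF}$, $\check{\mathbb{P}}$-a.e. backward path converges to some $\eta \in \mathcal{PMF}$, the harmonic measures $\nu, \check\nu$ are non-atomic and concentrated on $\mathcal{UE}$, and the diagonal has measure zero; this verifies condition 1 of Theorem \ref{mainabs} and guarantees that $\nu \otimes \check\nu$-a.e. pair $(\eta,\xi)$ is a pair of distinct uniquely ergodic foliations. Second, for such a pair I would use Masur's existence and uniqueness results for Teichm\"uller geodesics with uniquely ergodic vertical foliation to show $P(\eta,\xi)$ is non-empty, and then argue that $D(\eta,\xi) := \sup_{\gamma \in P(\eta,\xi)} d_T(x,\gamma)$ is finite: when both endpoints are uniquely ergodic the geodesic is in fact unique, so the supremum is over a single geodesic and is automatically finite. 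Third, I would check Borel measurability of $D$ on the relevant full-measure set, following the scheme of Lemma \ref{measurable} — covering $\mathcal{PMF}$ by a countable base of open sets, expressing $\{D \geq R\}$ as a countable combination of pairs of neighbourhoods joined by geodesic segments avoiding $B(x,R)$, and using properness of $\mathcal{T}(S)$ together with Arzel\`a--Ascoli to pass to the limit. With conditions 1 and 2 of Theorem \ref{mainabs} in hand, the theorem produces, for a.e. sample path, a Teichm\"uller geodesic ray $\gamma$ with $\lim_n d_T(w_n x, \gamma(An))/n = 0$; positivity of $A$ follows because $\nu$ is non-atomic, hence the Poisson boundary is non-trivial, while $Mod(S)$ has exponentially bounded growth on $\mathcal{T}(S)$, so the entropy criterion (Theorem \ref{entrocrit}) rules out $A = 0$. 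Finally, as in the Gromov hyperbolic case, the geodesic ray supplied by Theorem \ref{mainabs} need not pass through $x$; I would replace it by the geodesic ray from $x$ to the same uniquely ergodic endpoint $\xi \in \mathcal{UE}$ and invoke a result (again essentially Masur) that two Teichm\"uller rays with a common uniquely ergodic endpoint stay within bounded Teichm\"uller distance, so the tracking estimate is preserved.

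The main obstacle is the second step: establishing that the pencil $P(\eta,\xi)$ is non-empty and that $D(\eta,\xi)$ is almost surely finite. Unlike in a Gromov hyperbolic or CAT(0) space, existence of a geodesic line joining two boundary points is a genuinely subtle fact in Teichm\"uller space — it can fail for pairs of non-uniquely-ergodic foliations — so everything rests on the fact, guaranteed by \cite{KM}, that the harmonic measures live on $\mathcal{UE}$, combined with Masur's theorem that a bi-infinite Teichm\"uller geodesic is determined by, and exists for, a pair of distinct foliations one of which is uniquely ergodic. The measurability verification is the second-hardest point, since the topology on $\mathcal{PMF}$ and the behaviour of geodesic segments with endpoints tending to the boundary require care, but it parallels Lemma \ref{measurable} closely enough that no new idea is needed. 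Everything else — convergence to the boundary, non-atomicity, the entropy/growth argument for $A > 0$, and the passage from an arbitrary tracking ray to one based at $x$ — is either quoted directly or is a routine adaptation of the Gromov hyperbolic argument already given in the excerpt.
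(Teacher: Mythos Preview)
Your proposal is correct and follows essentially the same route as the paper: verify hypotheses 1 and 2 of Theorem \ref{mainabs} using the Thurston boundary and the Kaimanovich--Masur results, apply the abstract criterion, upgrade to $A>0$ via the entropy criterion and exponentially bounded growth, and finally replace the tracking ray by one based at $x$ using Masur's bounded-distance result for rays with a common uniquely ergodic vertical foliation.

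There is one place where the paper is cleaner than your outline and worth flagging. For the existence and measurability of $D$, the paper does not attempt to adapt the scheme of Lemma \ref{measurable}; that lemma's Arzel\`a--Ascoli step genuinely uses stable visibility, which you yourself note fails for the Thurston compactification, so the adaptation you propose would require an extra Teichm\"uller-specific ingredient to push through. Instead the paper observes that for any pair of \emph{transverse} measured foliations there is a \emph{unique} quadratic differential, hence a unique Teichm\"uller geodesic $[F_1,F_2]$, so $P(F_1,F_2)$ is a singleton and $D(F_1,F_2)=d_T(x,[F_1,F_2])$ is automatically finite. Measurability then comes for free: $D$ is \emph{continuous} on the set of transverse pairs (this is cited from \cite{KM}, Lemma 1.4.3), and since $\nu\otimes\check\nu$ is concentrated on $\mathcal{UE}\times\mathcal{UE}\setminus\Delta$ (distinct uniquely ergodic foliations are transverse), $D$ is defined and continuous on a full-measure Borel set. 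So the second step, which you rightly identify as the crux, is handled by a direct citation rather than by reproving a visibility-type lemma.
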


Let us also remark that in \cite{Du}, the tracking is a consequence of an additional geometric property 
of Teichm\"uller space (``thin-framed triangles are thin''). Our result is completely independent of it, and indeed uses only \cite{KM}.

In order to prove Theorem \ref{teichtracking}, we are going to use the \emph{Thurston compactification} of Teichm\"uller space. 
Let $\mathcal{S}$ be the set of homotopy classes of simple closed curves. The \emph{geometric intersection number}
$i(\alpha, \beta)$ between two elements of $\mathcal{S}$ is the minimal number of intersections of any two representatives 
of $\alpha$ and $\beta$. The map 
$$\alpha \to i(\cdot, \alpha)$$
defines an inclusion of $\mathcal{S}$ into the set $\mathbb{R}^\mathcal{S}$ of functions on the set of simple closed curves.
The closure of the image of the set $\{ r\alpha, \alpha \in \mathcal{S}, r \geq 0\}$ is the space $\mathcal{MF}$ of \emph{measured foliations},
and its projectivization is denoted as $\mathcal{PMF}$, the space of \emph{projective measured foliations}.
The intersection number $i( \cdot, \cdot)$ extends to a continuous function on $\mathcal{MF} \times \mathcal{MF}$, 
and given two elements $F_1, F_2$ in $\mathcal{PMF}$ it is well-defined whether $i(F_1, F_2)$ is zero or non-zero.

As discovered by Thurston, the space $\overline{X} = \mathcal{T} \cup \mathcal{PMF}$ can be given a topology
which makes it homeomorphic to a closed ball in euclidean space, in such a way that $\mathcal{PMF}$ 
corresponds to the boundary sphere. The space $\mathcal{PMF}$ is called the \emph{Thurston boundary} of Teichm\"uller space, 
and the mapping class group acts on it by homeomorphisms.

A measured foliation $F$ is \emph{minimal} if it intersects all simple closed curves, i.e. $i(F, \alpha) > 0$ 
for all $\alpha \in \mathcal{S}$.
Two minimal foliations $F$ and $G$ are said to be \emph{topologically equivalent} if $i(F, G) = 0$, and \emph{transverse} 
if $i(F, G) > 0$. If $F$ is minimal  and the only topologically equivalent foliations to $F$ are its multiples, then $F$ is said to be \emph{uniquely ergodic}. 
The space of projective classes of (minimal) uniquely ergodic measured foliations will be denoted by $\mathcal{UE}$.

In the previous cases, in order to prove the theorem we used the stable visibility of the compactification to associate to 
almost each pair of points on the boundary a geodesic in an equivariant way. In this case, even though stable visibility fails if the genus of $S$ is at least two, 
there is a more direct way to construct such a function. 
Indeed, let $Q(S)$ be the bundle of \emph{quadratic differentials} over Teichm\"uller space. The choice of some $q \in Q(S)$ determines 
a flat (often singular) structure on $S$, and a pair of transverse measured foliations, namely the  
\emph{horizontal} foliation $H_q := \textup{ker}(\textup{Re }\ q^{1/2})$ and the \emph{vertical} foliation $V_q := \textup{ker}(\textup{Im} \ q^{1/2})$. 

On the other hand, given any two transverse measured foliations $F_1, F_2 \in \mathcal{MF}$, there exists a unique quadratic differential 
$q \in Q(S)$ such that $H_q = F_1$ and $V_q = F_2$.

The Teichm\"uller geodesic flow just expands along the leaves of the horizontal foliation and shrinks along the vertical foliation by the same factor, hence 
any pair $F_1, F_2 \in \mathcal{PMF}$ of transverse projective measured foliations determines a unique Teichm\"uller geodesic: 
we will denote such geodesic as $[F_1, F_2]$.

\begin{lemma}
Given $x \in \mathcal{T}(S)$, the function $D: \mathcal{PMF} \times \mathcal{PMF} \to \mathbb{R}$ 
$$D(F_1, F_2) = d_T(x, [F_1, F_2])$$ 
is defined $\nu \otimes \check{\nu}$-a.e. and measurable. 
\end{lemma}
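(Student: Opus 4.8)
The plan is to verify the two conclusions separately, reducing both to statements about the Thurston compactification that are already in the literature. First, concerning the domain of definition: by the theorem of Kaimanovich–Masur \cite{KM}, since the support of $\mu$ generates a non-elementary subgroup, $\mathbb{P}$-a.e. sample path converges to a point $F \in \mathcal{UE}$, and the hitting measure $\nu$ is supported on $\mathcal{UE}$ and is non-atomic; the same holds for $\check{\nu}$ by applying the result to the reflected measure $\check{\mu}$. Two uniquely ergodic foliations that are not projectively equal have zero intersection number only if they are multiples of each other, so for $\nu \otimes \check{\nu}$-a.e. pair $(F_1, F_2)$ we have $i(F_1, F_2) > 0$, i.e. the pair is transverse, and hence there is a unique quadratic differential $q$ with $H_q = F_1$, $V_q = F_2$, and a well-defined Teichmüller geodesic $[F_1, F_2]$. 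Thus $D(F_1, F_2) = d_T(x, [F_1, F_2])$ is a finite real number off a $\nu \otimes \check{\nu}$-null set. (In fact, since a Teichmüller geodesic is a properly embedded line in a proper space, the infimum defining $d_T(x,[F_1,F_2])$ is attained and finite whenever the geodesic is defined.)

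Second, measurability. The strategy is to exhibit $D$ as a composition of Borel maps. The assignment $(F_1, F_2) \mapsto q$, sending a transverse pair of projective measured foliations to the unique quadratic differential with those horizontal and vertical foliations, is continuous on the open set $\{\, i(F_1,F_2) > 0 \,\}$ of $\mathcal{PMF} \times \mathcal{PMF}$: this is essentially the statement that the period/intersection data depend continuously on $q$, combined with uniqueness, so a standard compactness-and-uniqueness argument upgrades uniqueness to continuity. Composing with the continuous map $Q(S) \to \Gamma\mathcal{T}(S)$ that sends $q$ to its Teichmüller geodesic (parametrized by arc length through the projection of $q$ to $\mathcal{T}(S)$), and then with $\gamma \mapsto d_T(x,\gamma) = \inf_t d_T(x,\gamma(t))$, which is continuous on $\Gamma\mathcal{T}(S)$ with the topology of uniform convergence on compacts (it is even $1$-Lipschitz for a natural metric), we obtain that $D$ is continuous, a fortiori Borel, on the open transverse locus. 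Outside that locus we set $D = \infty$; this locus has full $\nu \otimes \check{\nu}$-measure and is Borel, so $D$ is Borel-measurable on all of $\mathcal{PMF} \times \mathcal{PMF}$.

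The main obstacle is the continuity of $(F_1,F_2) \mapsto q$, or more precisely making precise the sense in which the Teichmüller geodesic $[F_1,F_2]$ varies continuously with its endpoints — one must be careful because the base point of the geodesic is not canonical, so the right statement is continuity into the space $\Gamma\mathcal{T}(S)$ of geodesic \emph{lines} up to reparametrization, or equivalently into $Q(S)$ itself via the normalization $|q| = 1$. Once the correct target space is fixed, this follows from the fact that $q$ is determined by the pair $(H_q, V_q)$ together with a normalization of area, and that both the map $q \mapsto (H_q, V_q)$ and its (partially defined) inverse are continuous, which is classical; alternatively one can invoke directly the continuity statement for the assignment of Teichmüller geodesics to transverse foliation pairs as recorded in Masur's work on the geometry of the Teichmüller metric. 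Everything else is routine: the finiteness off a null set is immediate from $\mathcal{UE}$ being the support of $\nu$ (and $\check\nu$) together with the definition of unique ergodicity, and the passage from continuity on an open set of full measure to Borel-measurability on the whole product is automatic. With this lemma in hand, the hypotheses of Theorem \ref{mainabs} are met (taking $P(F_1,F_2)$ to be the singleton $\{[F_1,F_2]\}$ on the transverse locus and $\emptyset$ elsewhere, which is manifestly $G$-equivariant since the mapping class group acts on $\mathcal{PMF}$ compatibly with the correspondence with quadratic differentials), and Theorem \ref{teichtracking} follows, with $A > 0$ because the hitting measure is non-atomic and $Mod(S)$ acting on its Cayley graph — hence on $\mathcal{T}(S)$ — has exponentially bounded growth, so the entropy criterion (Theorem \ref{entrocrit}) forbids $A = 0$.
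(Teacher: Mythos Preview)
Your argument is correct and follows the same route as the paper: show that $\nu$ and $\check\nu$ are non-atomic and supported on $\mathcal{UE}$ (via Kaimanovich--Masur), so that $\mathcal{UE}\times\mathcal{UE}\setminus\Delta$ has full measure and consists of transverse pairs, and then establish measurability by proving $D$ is continuous on the transverse locus. The only difference is cosmetic: the paper dispatches the continuity of $D$ by citing \cite{KM}, Lemma~1.4.3 directly, whereas you unpack it as a composition through $Q(S)$ and $\Gamma\mathcal{T}(S)$; your final paragraph about applying Theorem~\ref{mainabs} is extraneous to the lemma itself but harmless.
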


\begin{proof}
By (\cite{KM}, Theorem 2.2.4), $\nu$-a.e. sample path converges to some uniquely ergodic projective measured foliation $F \in \mathcal{UE} \subseteq \mathcal{PMF}$, 
and so does $\check{\nu}$-a.e. backward sample path. Moreover, since the measures $\nu$ and $\check{\nu}$ are non-atomic, the diagonal  
$\Delta := \{ (F, F) \ : \ F \in \mathcal{UE} \}$ has zero measure. Since two non-equivalent minimal uniquely ergodic foliations 
are transverse, the function $D$ is defined on the set $\mathcal{UE} \times \mathcal{UE} \setminus \Delta$, which has full measure. Moreover, $D$ is continuous on the subset of $\mathcal{PMF} \times 
\mathcal{PMF}$ where it is defined (\cite{KM}, Lemma 1.4.3) hence the measurability.
\end{proof}

\begin{proof}[Proof of Theorem \ref{teichtracking}.]
Kaimanovich and Masur \cite{KM} proved that almost every sample path $(w_n x)$ converges almost surely to some 
uniquely ergodic projective measured foliation $F$. By Theorem \ref{mainabs} and the previous lemma, 
$(w_n x)$ tracks sublinearly some geodesic ray $\gamma : [0, \infty) \to \mathcal{T}(S)$. 
Moreover, the rate of escape is positive because the Poisson boundary of $(G, \mu)$ is non-trivial, 
and the action has exponentially bounded growth (\cite{KM}, Theorem 1.3.2 and Corollaries). 
Let now $y := \gamma(0)$, and $\gamma'$ be a geodesic through $y$ determined by the 
same vertical foliation as $\gamma$. 
Now, $\gamma$ and $\gamma'$ are geodesic rays with the same vertical foliation, hence they have bounded Hausdorff distance
(by Masur \cite{Mas80} if the vertical foliation is uniquely ergodic, which happens almost surely; 
for a more general result see also Ivanov \cite{Iv}), so $\gamma'$ tracks 
the random walk sublinearly and passes through $x$. 
Let us finally remark that the vertical foliation of $\gamma$ will coincide almost surely with $F$: 
indeed, if we call $F_0$ the vertical foliation of $\gamma$, $F_0$ will be almost surely uniquely ergodic, 
hence $\lim_{n \to \infty} \gamma(An) = F_0$. Thus, by sublinear tracking and (\cite{KM}, Lemma 1.4.2), 
$(w_n x)$ also tends to $F_0$ as $n \to \infty$, so $F = F_0$.
\end{proof}

\subsection{Hadamard spaces} \label{hadamard}

Another nice class of spaces with non-positive curvature geometry 
are Hadamard spaces. 

\begin{definition}
A metric space $(X, d)$ is called a \emph{Hadamard space} if it is simply connected, complete, geodesic and CAT(0).
\end{definition}

An example of a Hadamard space is a simply connected, complete Riemannian manifold of nonpositive sectional curvature.
An equivalent characterization is the following, given by Bruhat and Tits (see \cite{Ba} for general references):

\begin{proposition}
Let $(X, d)$ be a complete metric space. $X$ is a \emph{Hadamard space} if and only if for every $x, y \in X$, there exists a point $m \in X$ such that
$$d^2(z, m) \leq \frac{1}{2} (d^2(z, x) + d^2(z, y)) - \frac{1}{4} d^2(x, y) \qquad \textup{for all }z \in X,$$ 
where we used $d^2(x, y)$ as a shorthand for $(d(x, y))^2$.
\end{proposition}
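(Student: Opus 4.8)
The plan is to prove both implications, using the midpoint of a geodesic segment as the point $m$.

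\emph{From Hadamard to the inequality.} Suppose $X$ is a Hadamard space and fix $x,y \in X$; let $m$ be the midpoint of the (unique) geodesic segment $[x,y]$. Given any $z\in X$, form a comparison triangle $\triangle(\bar x,\bar y,\bar z)\subset\mathbb{E}^2$ with the same side lengths, and let $\bar m$ be the midpoint of $[\bar x,\bar y]$, which is the comparison point of $m$. An elementary computation in the Euclidean plane (the median length formula, equivalent to the parallelogram law) gives $|\bar z-\bar m|^2 = \tfrac12|\bar z-\bar x|^2 + \tfrac12|\bar z-\bar y|^2 - \tfrac14|\bar x-\bar y|^2$ with equality. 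The CAT(0) comparison inequality gives $d(z,m)\le|\bar z-\bar m|$, and since the three side lengths agree this yields $d^2(z,m)\le\tfrac12 d^2(z,x)+\tfrac12 d^2(z,y)-\tfrac14 d^2(x,y)$, as desired.

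\emph{From the inequality to Hadamard.} Assume that for each pair $x,y$ there is a point $m=m(x,y)$ satisfying the displayed inequality. First I would observe that $m$ is forced to be the midpoint of $x$ and $y$: taking $z=x$ gives $d^2(x,m)\le\tfrac14 d^2(x,y)$, hence $d(x,m)\le\tfrac12 d(x,y)$, and symmetrically $d(y,m)\le\tfrac12 d(x,y)$; the triangle inequality then forces both to be equalities. Moreover the midpoint is unique: if $m'$ is another midpoint, plugging $z=m'$ into the inequality for $m$ gives $d^2(m',m)\le 0$. Next I would construct a geodesic from $x$ to $y$: define $c$ on the dyadic rationals of $[0,1]$ by $c(0)=x$, $c(1)=y$ and successive bisection, $c\bigl(\tfrac{2k+1}{2^{n+1}}\bigr):=m\bigl(c(\tfrac{2k}{2^{n+1}}),c(\tfrac{2k+2}{2^{n+1}})\bigr)$. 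An induction on the level, together with uniqueness of midpoints and the triangle inequality, shows $d(c(s),c(t))=|s-t|\,d(x,y)$ for all dyadic $s,t$, and completeness of $X$ lets $c$ extend to a geodesic $[0,d(x,y)]\to X$. Thus $X$ is geodesic, and uniqueness of midpoints propagates (by bisecting) to uniqueness of geodesics.

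It remains to see that $X$ is CAT(0) and simply connected. For CAT(0): we now have the CN inequality at the midpoint of every geodesic segment of $X$, and the classical fact (see \cite{BH} or \cite{Ba}) that a geodesic metric space with this property is CAT(0); the proof applies the CN inequality to a sequence of bisections of a geodesic triangle $\triangle(x,y,z)$ to bound $d(z,p)$ by the Euclidean comparison distance for every point $p$ on $[x,y]$, and then deduces the general four-point comparison. Finally, every CAT(0) space is contractible --- geodesic contraction toward a fixed basepoint $o$, i.e. $H(s,p):=\gamma_p(s)$ with $\gamma_p$ the geodesic from $p$ to $o$, is jointly continuous by the convexity estimate $d(\gamma_p(s),\gamma_q(s))\le(1-s)\,d(p,q)$ --- hence simply connected. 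Since $X$ is complete by hypothesis, geodesic and CAT(0) by the above, and simply connected, it is a Hadamard space. The main obstacle is the CAT(0) step: deriving the full comparison inequality from CN at midpoints is the one place requiring a genuine (though classical) argument, and I would invoke the bisection argument just sketched rather than reproving it from scratch.
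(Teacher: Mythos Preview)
The paper does not prove this proposition at all: it is stated as a known characterization due to Bruhat and Tits, with a reference to Ballmann's book \cite{Ba}. Your argument is correct and is essentially the standard proof one finds in the literature (comparison triangles plus the Euclidean median formula for the forward direction; uniqueness of midpoints, dyadic bisection to build geodesics, and the CN-to-CAT(0) bisection argument for the converse), so there is nothing to compare beyond noting that you supply what the paper merely cites.
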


Let us suppose from now on that $X$ is a locally compact Hadamard space.
A boundary of $X$ is given by the set $X(\infty)$ of equivalence classes of geodesic rays: 
$$X(\infty) := \{ \sigma : [0, \infty) \to X \ \textup{geodesic ray} \} / \sim$$
where $\sigma_1 \sim \sigma_2$ iff there exists $M$ such that $d(\sigma_1(t), \sigma_2(t)) \leq M$ for all $t \geq 0$.
It is not hard to verify that $X(\infty)$ is compact, and isometries of $X$ extend to homeomorphisms of the boundary 
(see \cite{EO}, or \cite{Ba}).

In order to apply our technique, we need to be able to connect with a geodesic almost every 
pair of points on the boundary; this is not true in all Hadamard spaces (for instance in the euclidean plane), 
hence we need to impose a slightly stronger negative-curvature condition.  
For instance, Eberlein and O'Neill \cite{EO} introduced the concept of \emph{visibility manifolds}, which are precisely 
Hadamard manifolds for which our stable visibility condition holds (\cite{EO}, Proposition 4.4), so we get sublinear tracking in this case. 
However, in the following we will see that the argument works for a slightly more general class of Hadamard spaces. 

\begin{definition}
A geodesic $\sigma : \mathbb{R} \to X$ is \emph{regular} if it does not bound a flat half plane.
A locally compact Hadamard space is said to be of \emph{rank one} if there is at least one regular geodesic. 
\end{definition}

For instance, the hyperbolic plane  $\mathbb{H}^2$ has rank one, while the euclidean plane $\mathbb{R}^2$ is still a Hadamard space, 
but not of rank one.


We also need to ensure that our group $G$ of isometries is large enough. The following condition was introduced by Eberlein and Chen \cite{EC}:

\begin{definition}
We say that $\xi$, $\eta \in X(\infty)$ are \emph{dual} if there is a sequence $g_n \in G$ such that, for some $x \in X$,  $g_n x\to \xi$ 
and $g_n^{-1} x \to \eta$. We say that $G$ satisfies the \emph{duality condition} if for any geodesic $\sigma$ the endpoints $\sigma(-\infty)$ 
and $\sigma(\infty)$ are dual.
\end{definition}




Let us remark that if $X$ is a Hadamard manifold, and $G$ acts properly discontinuously in such a way that 
the quotient $X/G$ has finite volume, then the duality condition is satisfied.

By applying the previous techniques we have the following




\begin{theorem}
Let $X$ be a locally compact Hadamard space of rank one such that $X(\infty)$ contains at least three points, and $G$ be a countable group of isometries of $X$ 
satisfying the duality condition. Let $x \in X$ be a basepoint, and $\mu$ a probability measure on $G$ whose support generates $G$ as a semigroup, 
and with finite first moment.
Then there exists some $A \geq 0$ such that for $\mathbb{P}$-a.e. sample path, there exists a geodesic ray 
$\gamma : [0, \infty) \to X$ with $\gamma(0) = x$ and 
$$\lim_{n \to \infty} \frac{d(w_n x, \gamma(An))}{n} = 0.$$
\end{theorem}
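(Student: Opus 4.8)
The plan is to verify the hypotheses of Theorem~\ref{mainabs}, using the boundary $\partial X := X(\infty)$ of equivalence classes of geodesic rays. There are two things to check: first, that $\mathbb{P}$-a.e.\ sample path (and $\check{\mathbb{P}}$-a.e.\ backward sample path) converges to a point of $X(\infty)$; second, that there is a $G$-equivariant map $P : X(\infty) \times X(\infty) \to \mathcal{P}(\Gamma X)$ whose associated function $D(\eta,\xi) := \sup_{\gamma \in P(\eta,\xi)} d(x,\gamma)$ is Borel-measurable and finite $\nu \otimes \check{\nu}$-a.e. For the equivariant map, the natural candidate is to let $P(\eta,\xi)$ be the set of all geodesic lines $\gamma : \mathbb{R} \to X$ with $\gamma(+\infty) = \xi$ and $\gamma(-\infty) = \eta$; this is manifestly $G$-equivariant since isometries act on $X(\infty)$ compatibly with their action on geodesics.

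The key geometric input is the rank-one and duality hypotheses, which together force enough hyperbolicity. First I would recall the standard fact (due to Ballmann, see \cite{Ba}) that in a rank-one Hadamard space the regular geodesics, and more generally pairs of endpoints that can be joined by a regular geodesic, are plentiful: under the duality condition the set of pairs $(\eta,\xi) \in X(\infty)\times X(\infty)$ that are joined by a (regular) geodesic line contains a $G$-invariant dense open set, and any two points in the limit set of a rank-one group with at least three boundary points are connected by a geodesic with uniform contraction properties. In particular, for a pair $(\eta,\xi)$ joined by a regular geodesic $\gamma_0$, every geodesic line in $P(\eta,\xi)$ stays within bounded Hausdorff distance of $\gamma_0$ (this is the Morse-type property of regular geodesics in rank one), so $D(\eta,\xi) < \infty$ on this set. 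For the almost-sure convergence of sample paths and the concentration of $\nu$ on such ``good'' pairs, I would invoke the results on random walks on rank-one spaces with the duality condition (Karlsson--Margulis \cite{KarMar} and Ballmann's boundary theory), which give that $\nu$ is non-atomic and supported on the (radial) limit set, so $\nu \otimes \check{\nu}$ gives full measure to pairs of distinct points joined by a regular geodesic, and the diagonal has measure zero.

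For measurability of $D$, the argument parallels Lemma~\ref{measurable}: $X(\infty)$ is compact and metrizable, cover it by countably many open sets $(U_n)$ that form a base and such that only finitely many meet any ball $B(x,R)$, then express $\{(\eta,\xi) : D(\eta,\xi) \geq R\}$ as a countable combination of conditions of the form ``$U_n$ and $U_m$ are joined by a geodesic segment avoiding $B(x,R)$'', using the fact that in a rank-one space with the duality condition a sequence of geodesic segments whose endpoints converge to distinct boundary points and which avoid a fixed ball must eventually enter a larger fixed ball (the effective version of stable visibility on the relevant part of the boundary). With convergence to the boundary and finiteness plus measurability of $D$ in hand, Theorem~\ref{mainabs} applies directly and yields a geodesic ray $\gamma$ tracking $(w_n x)$ sublinearly; finally, as in the proofs of Theorems~\ref{teichtracking} and in section~\ref{Ghyp}, one replaces $\gamma$ by an asymptotic ray $\gamma'$ with $\gamma'(0) = x$, which lies at bounded Hausdorff distance from $\gamma$ because in a CAT(0) space two asymptotic rays are bounded distance apart, so $\gamma'$ also tracks the walk sublinearly and starts at $x$.

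The main obstacle I expect is controlling $D$ on the \emph{full} measure set rather than on the regular locus: the rank-one condition only guarantees the existence of \emph{some} regular geodesic, not that every limiting pair is joined by one, so the crux is showing that $\nu \otimes \check{\nu}$ really concentrates on pairs where the pencil $P(\eta,\xi)$ has bounded excursion near $x$. This is where one genuinely needs the interplay of duality (forcing the group, hence the harmonic measure, to ``see'' the rank-one direction) with non-triviality of the boundary and the non-elementarity of the support of $\mu$; I would handle it by showing the harmonic measure is supported on the closure of the set of endpoints of regular geodesic axes of elements of $G$, and that this set of pairs is exactly where $D$ is finite and, in fact, locally bounded, which also delivers the measurability argument above.
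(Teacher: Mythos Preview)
Your plan matches the paper's: verify Theorem~\ref{mainabs} with $\partial X = X(\infty)$, using the set $\mathcal{R}$ of pairs of endpoints of regular geodesics as the full-measure set on which $P$ is well behaved. The paper, however, handles the two points you flag as obstacles more directly than your proposed workarounds. First, for $(\eta,\xi)\in\mathcal{R}$ the joining geodesic is \emph{unique} (\cite{Ba}, Corollary~I.5.8, since a regular geodesic bounds no flat half-plane), so $P$ may be taken single-valued on $\mathcal{R}$ and $D(\eta,\xi)=d(x,\gamma)$ is continuous there; no Morse-type estimate and no adaptation of Lemma~\ref{measurable} are needed. Second, the obstacle you single out --- that $\nu\otimes\check\nu$ gives full mass to $\mathcal{R}$ --- is settled in two lines: $\mathcal{R}$ is open in $X(\infty)\times X(\infty)$ (\cite{Ba}, Lemma~III.3.1) and $\nu$ has full support on $X(\infty)$ because the Dirichlet problem is solvable (\cite{Ba}, Theorem~III.4.11), so $\nu\otimes\check\nu(\mathcal{R})>0$; since $\mathcal{R}$ is $G$-invariant and the shift on $G^{\mathbb{Z}}$ is ergodic, the measure must be $1$. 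Your proposed route via the closure of endpoints of regular axes and local boundedness of $D$ would at best reprove these facts and is unnecessary. (Convergence to the boundary is likewise taken from \cite{Ba}, Theorem~III.4.11, rather than from \cite{KarMar}.) Your final step --- replacing $\gamma$ by an asymptotic ray through $x$, using that asymptotic rays in a CAT(0) space stay at bounded distance --- is correct and is implicitly needed to get $\gamma(0)=x$.
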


\begin{proof}
By (\cite{Ba}, Thm. III.4.11), both $\mathbb{P}$-a.e. sample path and $\check{\mathbb{P}}$-a.e. sample backward path converge to a point on $\partial X$,
and the harmonic measure has full support on $X(\infty)$ since the Dirichlet problem is solvable.
If we now consider the set $\mathcal{R}:= \{ (\xi, \eta) \in X(\infty) \times X(\infty) \ : \ \exists \sigma 
\textup{ regular geodesic with }\sigma(-\infty)= \xi, \sigma(\infty) = \eta \}$
of endpoints of regular geodesics, it has full harmonic measure. 
Indeed, $\nu \otimes \check{\nu}(\mathcal{R}) > 0$ since $\mathcal{R}$ is open  (\cite{Ba}, Lemma III.3.1) and harmonic measure has full support, hence by ergodicity of the shift map
and the fact that $\mathcal{R}$ is $G$-invariant one has $\nu \otimes \check{\nu}(\mathcal{R}) = 1$.
Moreover, given any pair $(\xi, \eta) \in \mathcal{R}$, there exists a unique geodesic $\sigma$ with $\sigma(-\infty) = \xi$, $\sigma(\infty) = \eta$, 
(\cite{Ba}, Corollary I.5.8]) hence we have a measurable map $G^\mathbb{Z} \to \Gamma X$ defined on a set of full measure
and we can apply Theorem \ref{mainabs}.
\end{proof}

A different proof of sublinear tracking for general Hadamard spaces  
has been given by Karlsson and Margulis \cite{KarMar};
our technique gives an independent proof, even though with some restrictions.
An identification of the Poisson boundary for cocompact groups acting on rank one Hadamard manifolds 
is contained in \cite{BL}, where bi-infinite paths are already used. See also \cite{Ka00}.

\subsection{Lamplighter groups over trees} \label{ll}

We shall now turn to random walks on certain wreath products; in particular, we shall call 
\emph{lamplighter group over a tree} a wreath product of the form 
$$G := \mathbb{Z}_m \wr F_k,$$ 
where $\mathbb{Z}_m$ is a cyclic group of order $m \geq 2$ and $F_k$ is a free (non-abelian) group of rank $k \geq 2$.
Our goal is to establish sublinear tracking for the action of $G$ on its Cayley graph, endowed 
with a word metric.

Let us first recall a few general definitions.
Let $K$ and $H$ be two groups. Let us denote as $fun(H, K)$ the set of finitely supported 
functions from $H$ to $K$, i.e. the direct sum
$$fun(H, K) := \bigoplus_{h \in H} K.$$
The group $H$ acts on $fun(H, K)$ by translations; indeed, for each $h$ in $H$ 
we define $T_h : fun(H, K) \to fun(H, K)$ as 
$$T_h(f)(x) := f(h^{-1}x) \qquad \forall x \in H, f \in fun(H, K).$$
Let us recall that the \emph{(restricted) wreath product} $K \wr H$ is defined to 
be the semidirect product 
$$K \wr H := H \rtimes fun(H, K).$$

The reason for the name ``lamplighter'' is the following. Under the canonical choice of generators, the Cayley graph of $F_k$ is a homogeneous 
tree  of degree $2k$, the set of vertices of which we shall denote $\mathbb{T}$; we shall also denote as $e$ the vertex of the tree corresponding to the identity element.
We think of a function $f : \mathbb{T} \to \mathbb{Z}_m$ as determining the status of 
``lights'' which are located on each vertex of $\mathbb{T}$ and can assume different levels of ``color'' or ``intensity''
from $0$ to $m-1$; for this reason, each $f : \mathbb{T} \to \mathbb{Z}_m$ will be called a \emph{configuration}. The \emph{support} of a configuration $f$ is the set
$$\textup{supp }f := \{ x \in \mathbb{T} \ : \ f(x) \neq 0 \}.$$
Thus, the set $\mathcal{C} := fun(\mathbb{T}, \mathbb{Z}_m)$ will be called the set of \emph{finitely-supported configurations}, 
while the closure of $\mathcal{C}$ in the topology of pointwise convergence is the set $\overline{\mathcal{C}}$ of all (possibly 
infinitely-supported) configurations.

In this section, we shall consider the action of $G$ on its Cayley graph $X$.
The set of vertices of $X$ is the product $\mathbb{T} \times \mathcal{C}$, hence each vertex of $X$ is represented 
by a pair $(x, f)$ where $x$ is a vertex of the tree, which we think of as the current 
position of the lamplighter person, and $f : \mathbb{T} \to \mathbb{Z}_m$ is a finite configuration 
of lights.
We shall consider the word metric on the Cayley graph with respect to the standard generating set
$$S := \bigcup_{r = 1}^{k} (a_r^{\pm}, 0) \cup (e, \pm \delta_e)$$
where $a_1, \dots, a_k$ are free generators of $F_k$; moreover, $0$ denotes the configuration where all lights 
are off, and $\delta_e$ is the configuration whose value is $1$ on $e$ and $0$ everywhere else.
With this choice, 
there is an edge between $(x, f)$ and $(x',f')$ if either $f = f'$ and there is an edge between $x$ and $x'$ in the tree, 
or $x = x'$ and the functions $f$ and $f'$ differ only at $x$, i.e. 
$f(y) = f'(y)$ for all $y \neq x$, and $|f(x) - f'(x)| = 1$. 
The main result of this section is the following.

\begin{theorem} \label{llst}
Let $G = \mathbb{Z}_m \wr F_k$ be a lamplighter group over a tree, acting on its Cayley graph $X$
with the word metric described above. 
Let $\mu$ be a probability measure on $G$
with finite first moment and such that the support of $\mu$ does not fix any finite set of ends of $F_k$.
Then there exists $A > 0$ such that for each $x \in G$ and almost every sample path 
$(w_n)$ of the random walk determined by $\mu$ there exists a geodesic ray $\gamma : [0, \infty) \to G$ such that 
$$\lim_{n \to \infty}\frac{d(w_n x , \gamma(An))}{n} = 0.$$
\end{theorem}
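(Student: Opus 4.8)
The plan is to reduce Theorem \ref{llst} to the abstract tracking criterion of Theorem \ref{mainabs} by exhibiting a suitable bordification $\overline{X}$ of the Cayley graph $X = \mathbb{T} \times \mathcal{C}$ and a $G$-equivariant geodesic-assignment map $P$ satisfying conditions 1.\ and 2.\ there. The natural candidate for the boundary is built from the end boundary $\partial \mathbb{T}$ of the tree together with limit configurations in $\overline{\mathcal{C}}$: a sample path $w_n = (x_n, f_n)$ escapes to infinity precisely when the tree-component $x_n$ converges to an end $\xi \in \partial\mathbb{T}$ (the support of $\mu$ fixes no finite set of ends, so the projected walk on $F_k$ is a genuine non-elementary random walk on a tree and $x_n \to \xi$ almost surely), and in that case the configurations $f_n$ stabilize pointwise to a (generally infinitely supported) limit configuration $f_\infty$ whose support accumulates only along the geodesic ray toward $\xi$. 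Thus I would set $\partial X := \{(\xi, f_\infty)\}$ with $\xi \in \partial\mathbb{T}$ and $f_\infty \in \overline{\mathcal{C}}$ supported on the ray $[e,\xi)$ plus finitely many branches, topologized so that the $G$-action extends continuously; this is essentially the boundary described by Karlsson and Woess \cite{KarWoe}, and I would invoke their identification of the Poisson boundary to get convergence of $\mathbb{P}$-a.e.\ (and $\check{\mathbb{P}}$-a.e.) sample path to a point of $\partial X$, verifying condition 1.

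Next I would construct the equivariant map $P : \partial X \times \partial X \to \mathcal{P}(\Gamma X)$. Given two boundary points $\eta = (\zeta, g_\infty)$ and $\xi = (\xi', f_\infty)$ with $\zeta \neq \xi'$ in $\partial\mathbb{T}$, there is a unique bi-infinite geodesic line $\ell$ in the tree $\mathbb{T}$ joining $\zeta$ to $\xi'$; I would lift this to a geodesic line in $X$ by having the lamplighter traverse $\ell$ while lighting each lamp along $\ell$ to its prescribed value (from $g_\infty$ on the $\zeta$-side of the overlap point and from $f_\infty$ on the $\xi'$-side), which traces out an isometric copy of $\mathbb{R}$ in $X$ precisely because each lamp on $\ell$ is visited exactly once and the cyclic-group cost of each is absorbed into that single visit. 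The finitely many off-$\ell$ lamps in the limit configurations either lie on a bounded set (if they sit near the overlap region) and can be handled by a bounded detour, or — crucially — the definition of $\overline{\mathcal{C}}$ forces all but finitely many lamps of $f_\infty$ to lie on the ray $[o,\xi')$ itself, so the distance $D(\eta,\xi) = \sup_{\gamma \in P(\eta,\xi)} d(x,\gamma)$ from the basepoint to this geodesic is finite; equivariance of $P$ is automatic since $G$ acts by permuting lamps and translating the tree, and since the base vertex $o$ of the overlap of $\zeta,\xi'$ moves equivariantly. Measurability of $D$ follows from the explicit combinatorial description of $P$ in terms of the tree-geodesic and finitely-many-lamp data, so condition 2.\ holds off the diagonal, which has $\nu \otimes \check\nu$-measure zero since the harmonic measure on $\partial\mathbb{T}$ is non-atomic.

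With conditions 1.\ and 2.\ verified, Theorem \ref{mainabs} yields a geodesic ray $\gamma:[0,\infty)\to X$ with $d(w_n x, \gamma(An))/n \to 0$ for a.e.\ sample path; the ray through the prescribed basepoint $x$ is obtained, as in the earlier applications, by replacing $\gamma$ with a parallel geodesic ray asymptotic to the same boundary point and passing through $x$, which lies within bounded distance of $\gamma$ (two geodesic rays in $X$ converging to the same $(\xi', f_\infty)$ stay bounded distance apart, since their tree-projections are rays into $\xi'$ and the lamp-configurations they realize agree off a bounded set). Positivity of $A$ follows from Theorem \ref{entrocrit}: the group $G$ is finitely generated, hence the action on its Cayley graph has exponentially bounded growth, and the harmonic measure $\nu$ is non-atomic so the Poisson boundary is non-trivial, forcing $A > 0$. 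The main obstacle I anticipate is the construction and topological verification of the bordification $\overline{X}$ — in particular checking that it is Hausdorff and second countable, that the $G$-action extends by homeomorphisms, and that the limit configuration genuinely has support confined (up to a finite set) to the limiting ray so that $D$ is a.e.\ finite; this is where the specific geometry of lamplighter-over-tree groups enters, and where I would lean most heavily on \cite{KarWoe}.
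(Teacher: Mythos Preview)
Your overall architecture matches the paper's: use the Karlsson--Woess boundary of limit configurations to verify condition~1 of Theorem~\ref{mainabs}, build a $G$-equivariant map $P$ for condition~2, then invoke the entropy criterion for $A>0$. The gap is in your construction of $P$.

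You assert that ``the definition of $\overline{\mathcal{C}}$ forces all but finitely many lamps of $f_\infty$ to lie on the ray $[o,\xi')$ itself.'' This is false. The definition of $\partial X$ only requires that the support of $f_\infty$ have finite intersection with the complement of every neighbourhood of $\xi'$; nothing prevents infinitely many lamps from sitting on side branches at unbounded distance from the ray, provided those branches hang off vertices marching out toward $\xi'$. So your explicit ``walk along $\ell$ and light lamps'' recipe does not in general produce a geodesic line, and your argument for finiteness of $D(\eta,\xi)$ collapses. There is also no reason to expect this pathology to be $\nu$-null without further argument.

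The paper sidesteps this by taking $P(\eta,\xi)$ to be the full pencil of geodesic lines with the prescribed endpoints and proving a \emph{stable visibility} statement for the set $\Lambda = \{((x,f),(y,g)) : x\neq y\}\subseteq \partial X\times\partial X$ (Proposition~\ref{llsv}). The key geometric input is Lemma~\ref{crossonce}: any geodesic in $X$ between $(x,f)$ and $(y,g)$ projects to a travelling-salesman path that crosses each edge of $[x,y]\subseteq\mathbb{T}$ exactly once. From this one extracts, on every approximating geodesic segment, a point whose tree-coordinate is a fixed vertex $z\in[x,y]$ and whose configuration is supported in a fixed finite set independent of $n$. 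Properness plus Ascoli--Arzel\`a then give both non-emptiness of the pencil and finiteness of $D$, and the measurability argument of Lemma~\ref{measurable} goes through verbatim. This is the missing idea in your proposal.

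Finally, your last paragraph about replacing $\gamma$ by a parallel ray through $x$ is unnecessary (the theorem does not ask for $\gamma(0)=x$), and your claim that two rays converging to the same boundary point stay at bounded distance is not obvious in this space and is not used in the paper.
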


Random walks on lamplighter groups have been widely studied, especially because they provided interesting counterexamples
to several conjectures, most notably by Kaimanovich and Vershik \cite{KaiVer} and then Erschler \cite{Ers}.
Lamplighter random walks over trees have been introduced by Karlsson and Woess \cite{KarWoe}, who identified 
the Poisson boundary with the set of limit configurations (see below).
For a more complete bibliography, we refer to the references within the abovementioned works.
Note that the theorem actually works in slightly greater generality, replacing the free group $F_k$ with an arbitrary 
group of isometries of an infinite, locally finite tree, and replacing the cyclic group $\mathbb{Z}_m$ 
with any finite group.
Finally, note that taking $k = 1$ we get the lamplighter random walk over $\mathbb{Z}$, which is one of the main 
examples in \cite{KaiVer}; in that case, our technique works as long as the drift of the projected walk on $\mathbb{Z}$
is non-zero (otherwise, the Poisson boundary is trivial and we do not have convergence to the boundary).

In order to prove the theorem, let us analyze the geometry of $X$ in more detail.
Any two vertices $x, y \in \mathbb{T}$ are connected by a unique geodesic, which we will denote $[x, y]$.
Moreover, for each vertex $x$ of $\mathbb{T}$ we can define the set 
$$U_x := \{ z \in \mathbb{T} \ : \ x \in [e, z] \}$$
of points which are ``further away'' from the origin than $x$. 
Note that if $U_x \cap U_y = \emptyset$, then each continuous path from a point 
in $U_x$ to a point in $U_y$ contains all edges of the geodesic $[x, y]$.

The geodesics in $X$ have a simple and well-known interpretation. Namely, let $u = (x, f)$ and $v = (y, g)$
be two vertices of $X$. Then the projection to the tree of a geodesic in $X$ between $u$ and $v$ corresponds to a 
shortest ``travelling salesman'' path which starts from $x$, reaches all points of the set 
$\textup{supp }(f-g)\ $
and ends in $y$. 
In the case of trees, the shortest travelling salesman path can be completely characterized \cite{Parry}. 
In particular, we shall need the following property.

\begin{lemma} \label{crossonce}
Let $x, y$ be two vertices of the tree $\mathbb{T}$, and $u = (x, f), v = (y, g)$ two elements 
of $X$, and $\gamma$ a geodesic in $X$ joining $u$ to $v$. 
Then the projection of $\gamma$ to $\mathbb{T}$ contains every edge of the geodesic $[x, y]$ exactly once. 
\end{lemma}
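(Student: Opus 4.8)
The plan is to prove Lemma~\ref{crossonce} by analyzing the structure of the shortest travelling salesman path on a tree. Recall that the projection of any geodesic $\gamma$ in $X$ from $u=(x,f)$ to $v=(y,g)$ is a shortest walk in $\mathbb{T}$ that starts at $x$, visits every vertex of $\textup{supp}(f-g)$, and ends at $y$; its length is exactly $d(x,y)$ plus twice the number of lamp-switching moves, so the lamp configuration forces which subtree the walk must enter, but the walk is free to choose an efficient route. First I would recall the standard description of this shortest path on a tree (as in \cite{Parry}): let $\mathcal{Y}$ be the smallest subtree of $\mathbb{T}$ containing $x$, $y$, and all of $\textup{supp}(f-g)$. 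The shortest salesman walk traverses every edge of $\mathcal{Y}$ that does \emph{not} lie on the geodesic $[x,y]$ exactly twice (once out, once back), and every edge of $[x,y]$ that also lies in $\mathcal{Y}$ exactly once; edges of $[x,y]$ lie in $\mathcal{Y}$ automatically since $x,y\in\mathcal{Y}$. Hence every edge of $[x,y]$ is crossed exactly once.

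The key steps, in order, are: (1) set up $\mathcal{Y}$ as the convex hull in the tree of $\{x\}\cup\{y\}\cup\textup{supp}(f-g)$, and observe that any walk realizing the geodesic must traverse each edge of $\mathcal{Y}$ at least once (to reach the vertices it must visit) and, by a parity argument for closed/open walks on a tree, each edge of $\mathcal{Y}\setminus[x,y]$ at least twice; (2) exhibit the explicit depth-first-style walk that achieves exactly these bounds, so the geodesic length is $|[x,y]| + 2|\mathcal{Y}\setminus[x,y]|$ and any geodesic must meet these multiplicities with equality; (3) conclude that no edge of $[x,y]$ can be traversed more than once in a geodesic, since traversing it twice (or more) would add at least $2$ to the length, exceeding the minimum. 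Step (2) is where I would invoke \cite{Parry} rather than reprove the travelling-salesman-on-a-tree formula from scratch.

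The parity argument in step (1) deserves care: color the two endpoints $x,y$ of the geodesic; a walk from $x$ to $y$ traversing a given edge $e$ of the tree, where $e$ separates $\mathbb{T}$ into components $A\ni x$ and $B$, must cross $e$ an odd number of times if $y\in B$ and an even number of times if $y\in A$. For $e\in[x,y]$ we have $y\in B$, so the crossing count is odd, hence $\geq 1$; combined with the length-minimality from step (2), which forbids it from being $\geq 3$, it is exactly $1$. For $e\in\mathcal{Y}\setminus[x,y]$, the endpoint $y$ lies on the same side as $x$, so the count is even and, since $e$ must be crossed to reach a support vertex beyond it, it is $\geq 2$.

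The main obstacle I anticipate is making precise and justifying the claim in step (2) that a geodesic in $X$ \emph{must} achieve the minimal edge-multiplicities simultaneously for all edges — i.e. that there is no ``cheaper'' route that trades extra crossings on one edge for savings elsewhere. On a tree this is true essentially because the constraints decouple across edges (crossing edge $e$ is forced exactly by whether there are support vertices or the endpoint on the far side of $e$), but writing this cleanly requires either a clean citation to \cite{Parry} or a short self-contained induction on the size of $\mathcal{Y}$. I would opt for the citation, and keep the proof to the parity observation plus the remark that the lower bounds on all edges are simultaneously realized, which pins down the geodesic length and forces equality edge by edge.
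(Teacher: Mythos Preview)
Your argument is correct, but it takes a more global and structural route than the paper's. You invoke the full travelling-salesman formula on trees (via \cite{Parry}): build the convex hull $\mathcal{Y}$, use a parity argument to get lower bounds on edge multiplicities, then appeal to the explicit depth-first walk to realize these bounds simultaneously and force equality edge by edge. The paper instead gives a short, self-contained local argument: if some edge $E$ of $[x,y]$ were traversed more than once, then (implicitly using the same parity observation you spell out) it is traversed at least three times, so the projected walk decomposes as $\gamma_1 \cup E \cup \gamma_2 \cup E^{-1} \cup \gamma_3 \cup E \cup \gamma_4$; swapping the middle segments to $\gamma_1 \cup \gamma_3 \cup E \cup \gamma_2 \cup \gamma_4$ visits the same vertices with strictly fewer edge-crossings, contradicting minimality. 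Your approach yields more information (the exact multiplicity on every edge of $\mathcal{Y}$, not just those on $[x,y]$), at the cost of importing a citation; the paper's rearrangement trick is lighter and needs nothing beyond the tree having no loops. One small wrinkle to clean up in your write-up: the sentence ``its length is exactly $d(x,y)$ plus twice the number of lamp-switching moves'' is garbled---the length of $\gamma$ in $X$ is the number of tree moves plus the number of lamp moves, and what you want to say is that minimizing the former (for fixed support) is exactly the TSP problem on $\mathbb{T}$.
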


\begin{proof} 
Let $\widetilde{\gamma}$ be the projection of the path $\gamma$ to the tree. 
Since the tree contains no loops, then each continuous path in it from $x$ to $y$ 
must contain each edge of the geodesic $[x, y]$ at least once, and so does $\widetilde{\gamma}$.
Suppose now by contradiction that there exists an edge $E \subseteq [x, y]$ such that  
$\widetilde{\gamma}$ runs along $E$ more than one time. Then, if we call $z$ and $w$ the endpoints of $E$, 
we have that $\widetilde{\gamma}$ is of the form $\widetilde{\gamma} = \gamma_1 \cup E \cup \gamma_2 \cup E^{-1} \cup \gamma_3 \cup E \cup \gamma_4$, 
where $\gamma_1$ is a path which joins $x$ to $z$, $\gamma_2$ joins $w$ to $w$, $\gamma_3$ joins $z$ to $z$ and $\gamma_4$ joins 
$w$ to $y$, and $E^{-1}$ means the edge $E$ traversed in the opposite direction. This is clearly not a shortest travelling salesman path, because the path 
$\gamma' := \gamma_1 \cup \gamma_3 \cup E \cup \gamma_2 \cup \gamma_4$ visits the same locations and is shorter, hence the claim 
is proven.
\end{proof}



Let us now define a boundary for the graph $X$. 
Let $\overline{\mathbb{T}} := \mathbb{T} \cup \partial \mathbb{T}$ be the end compactification of the tree:
note that $X$ has only one end, even though $\mathbb{T}$ has infinitely many.
We shall take as a boundary for $X$ the set of \emph{limit configurations}, consisting of pairs of 
one end $\xi$ of $\mathbb{T}$ and a configuration of lights whose support can only 
accumulate at $\xi$:
$$\partial X := \{ (\xi, f) \in \partial \mathbb{T} \times \overline{\mathcal{C}} \ : \ 
(\textup{supp }f) \setminus U \textup{ is finite } \forall U \in Nbd(\xi)  \}$$
where $Nbd(\xi)$ denotes the set of neighbourhoods of $\xi$ in $\overline{\mathbb{T}}$.
Note that this boundary is not compact, indeed its closure is the whole $\partial \mathbb{T} \times \overline{ \mathcal{C}}$.
The set of limit configurations has been first proposed as a boundary for lamplighter groups over $\mathbb{Z}^k$ by 
Kaimanovich and Vershik \cite{KaiVer}, and for lamplighters over trees by Karlsson and Woess \cite{KarWoe}.

In order to apply the techniques of the previous sections, we need a version of the stable visibility 
property for $\partial X$. Unfortunately, the boundary $\partial X$ is not stably visible 
in the sense of Definition \ref{svdef}; however, we shall show that a suitable subset of $\partial X \times \partial X$ 
satisfies a version of stable visibility, and that such subset has full measure for the (doubly-infinite) random walk. 

Indeed, we say that a subset $\Lambda \subseteq \partial X \times \partial X$ 
is \emph{stably visible} if for any pair $(\xi, \eta) \in \Lambda$ and any sequence $\gamma_n = [\xi_n, \eta_n]$ of geodesic segments in $X$ 
with $\xi_n \to \xi$ and $\eta_n \to \eta$, there exists a bounded set in $X$ which intersects all $\gamma_n$.

\begin{proposition} \label{llsv}
The subset $\Lambda \subseteq \partial X \times \partial X$ defined as 
$$\Lambda := \{ ((x, f), (y, g)) \in \partial X \times \partial X \ : \ x \neq y \}$$
is stably visible.
\end{proposition}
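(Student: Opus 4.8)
The plan is to reduce the statement to a property of the tree $\mathbb{T}$ together with a control on where the light-configurations force a geodesic to travel. Fix a pair $((x,f),(y,g))\in\Lambda$, so $x\neq y$ are two distinct ends of $\mathbb{T}$, and take sequences $(\xi_n,f_n)\to(x,f)$ and $(\eta_n,g_n)\to(y,g)$ in $\overline{X}$, together with geodesic segments $\gamma_n=[\xi_n,\eta_n]$ in $X$. First I would recall that convergence in $\overline{X}$ means $\xi_n\to x$ and $\eta_n\to y$ in $\overline{\mathbb{T}}$, and $f_n\to f$, $g_n\to g$ pointwise. Since $x\neq y$ are distinct ends, the bi-infinite geodesic line $(x\,|\,y)$ in the tree passes through a well-defined vertex; pick a vertex $m$ on $[x,y]$ (for instance the point of $[x,y]$ closest to $e$), and let $B$ be a ball in $X$ of radius, say, $d(e,m)+1$ around the basepoint vertex $(e,0)$ — I will enlarge it if needed. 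The goal is to show every $\gamma_n$ meets $B$.

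The key geometric input is Lemma~\ref{crossonce}: the projection $\widetilde{\gamma}_n$ of $\gamma_n$ to $\mathbb{T}$ traverses every edge of the tree-geodesic $[\text{proj}(\xi_n),\text{proj}(\eta_n)]$ exactly once, and in particular $\widetilde{\gamma}_n$ contains the entire geodesic $[\text{proj}(\xi_n),\text{proj}(\eta_n)]$ as a subpath. Now, because $\xi_n\to x$ and $\eta_n\to y$ with $x\neq y$, for all large $n$ the tree-projections of $\xi_n$ and $\eta_n$ lie in disjoint subtrees $U_{x'}$ and $U_{y'}$ for suitable vertices $x',y'$ lying on $[x,y]$ on either side of $m$; hence by the remark preceding Lemma~\ref{crossonce}, any path from $\text{proj}(\xi_n)$ to $\text{proj}(\eta_n)$ contains all edges of $[x',y']$, and in particular passes through the vertex $m$. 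Therefore $\widetilde{\gamma}_n$ passes through $m$, so $\gamma_n$ contains a vertex of the form $(m,h_n)$ for some configuration $h_n$. It remains to bound the word-length of $(m,h_n)$, i.e. to show that the configuration $h_n$ carried by $\gamma_n$ when it is at $m$ is bounded in size independently of $n$ (then $B$ can be taken with radius $d(e,m)+1+\sup_n|h_n|$, a fixed number).

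This last point — bounding $h_n$ — is where I expect the real work to be, and it is the reason $\Lambda$ must be a proper subset of $\partial X\times\partial X$ with the condition $x\neq y$. The configuration $h_n$ at the moment $\gamma_n$ visits $m$ consists of: (i) lights of $f_n$ that have already been switched to their final value in $g_n$ among vertices on the ``$\xi_n$-side'' of $m$, together with (ii) lights not yet touched on the ``$\eta_n$-side''. Since $(\xi_n,f_n)\to(x,f)$ with $f$ a limit configuration whose support accumulates only at $x$, and likewise $g_n\to g$ with support accumulating only at $y\neq x$, the discrepancy configuration $f_n-g_n$ has the property that, outside a fixed finite set $W\subseteq\mathbb{T}$ (depending on $f$, $g$ and a neighbourhood of $\{x,y\}$ but not on $n$), its support on the $m$-separated side is contained in $U_{x'}$ resp. $U_{y'}$. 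A travelling-salesman path realizing $\gamma_n$ deals with those far-away lights entirely on one side of $m$ without carrying their values across $m$; thus when $\gamma_n$ is at $m$ the only lights it can be ``holding'' are those supported in the fixed finite set $W$, so $|h_n|\le \sum_{w\in W} m = m\,|W|$, a bound independent of $n$. I would make this precise by invoking the explicit description of shortest travelling-salesman paths in trees (\cite{Parry}) — concretely, that the salesman never needs to cross an edge $E$ of $[x',y']$ more than once, combined with Lemma~\ref{crossonce}, forces the ``cost'' of the configuration at $m$ to come only from finitely many universally-bounded vertices near $m$. Assembling: $\gamma_n$ meets the fixed bounded set $B=B\big((e,0),\,d(e,m)+1+m|W|\big)$ for all large $n$, and finitely many exceptional $\gamma_n$ can be absorbed by enlarging $B$ once more; hence $\Lambda$ is stably visible.
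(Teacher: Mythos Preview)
Your approach coincides with the paper's: use Lemma~\ref{crossonce} to locate a point $(z,h_n)$ on $\gamma_n$ with $z$ a fixed vertex of the tree-geodesic between the two ends, and then bound the configuration $h_n$ uniformly in $n$. The paper carries this out with one sharpening that removes the looseness in your bound on $h_n$.

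The slippery step is ``the only lights it can be holding are those supported in the fixed finite set $W$''. You reach this by tracking the discrepancy $f_n-g_n$, but $h_n$ is not controlled by $f_n-g_n$: on the $x$-side of $z$ you correctly observe that $h_n=g_n$ (the target configuration, already attained), and on the $y$-side $h_n=f_n$; a lamp that is lit identically in both $f_n$ and $g_n$ contributes nothing to $f_n-g_n$ yet remains lit in $h_n$. What must actually be bounded is $\textup{supp}\,g_n$ on the $x$-side and $\textup{supp}\,f_n$ on the $y$-side, separately. The paper handles this at the outset by choosing the neighbourhood $U=U_x$ of the end $x$ to be \emph{disjoint from} $\textup{supp}\,g$ (possible since $\textup{supp}\,g$ accumulates only at $y\neq x$), and symmetrically $V=U_y$ disjoint from $\textup{supp}\,f$. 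Then for large $n$ one has $g_n|_U=g|_U=0$ and $f_n|_V=f|_V=0$, so $h_n$ actually vanishes on $U\cup V$, and on the complement its support lies in the fixed finite set $(\textup{supp}\,f\cup\textup{supp}\,g)\setminus(U\cup V)$. With this choice the bound is immediate, and nothing beyond Lemma~\ref{crossonce} (in particular no further appeal to \cite{Parry}) is needed.
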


\begin{proof}
Let $u_n = (x_n, f_n)$ and $v_n = (y_n, g_n)$ two sequences of vertices of $X$ such that $u_n \to u_\infty = (x_\infty, f_\infty)$ 
and $v_n \to v_\infty = (y_\infty, g_\infty)$ tend to two points on the boundary of $X$, with $x_\infty \neq y_\infty$.
For each $n$, let $\gamma_n$ be a geodesic in $X$ joining $u_n$ to $v_n$.
By the definition of $\partial X$, 
we can choose a neighbourhood $U$ of $x_\infty$ which does not intersect the support of $g_\infty$, and 
a neighbourhood $V$ of $y_\infty$ which does not intersect the support of $f_\infty$, and in such a way that $U \cap V = \emptyset$.
Without loss of generality, we can assume $U = U_x$ and $V = U_y$ for some $x, y \in \mathbb{T}$.
For $n$ large enough, we have $u_n \equiv u_\infty$ on the complement of $U$, and 
$v_n \equiv v_\infty$ on the complement of $V$, and $x_n \in U, y_n \in V$. 
By Lemma \ref{crossonce}, the projection $\widetilde{\gamma}_n$ of $\gamma_n$ 
to the tree contains each edge of the geodesic $[x, y]$ exactly once.
Thus, if $z$ is a vertex on $[x, y]$, there exists an element $w_n$ on the geodesic $\gamma_n$
which is of the form $w_n = (z, h_n)$.
Moreover, when the path $\widetilde{\gamma}_n$ reaches $z$, then the lights in $U$ have been already 
switched, while none of the lights in $V$ has been switched. Thus, the support of $h_n$ is contained in $\mathbb{T} \setminus U \cup V$.
As a consequence, since on $\mathbb{T} \setminus U \cup V$ we have $f_n \equiv f_\infty$ and $g_n \equiv g_\infty$, we get the inclusion
$$\textup{supp }h_n \subseteq \textup{supp }f_\infty \cup \textup{supp }g_\infty \setminus U \cup V$$
so the support of $h_n$ is contained in a ball around the origin of radius independent of $n$.
Thus, both coordinates of $w_n$ are bounded independently of $n$ and the claim is proven.
\end{proof}

\begin{proof}[Proof of Theorem \ref{llst}.]
Karlsson and Woess prove (\cite{KarWoe}, Theorem 2.9) that almost every sample path converges to a point in $\partial X$ (using the 
result about convergence to the ends for random walks on trees in \cite{CS}). 
Moreover, they also prove that the harmonic measure is non-atomic on the set of ends, 
so the set 
$$\Lambda := \{ ((x, f), (y, g)) \in \partial X \times \partial X \ : \ x \neq y \}$$
has full $\nu \otimes \hat{\nu}$-measure. Thus, by Proposition \ref{llsv}, the set $\Lambda$
is stably visible and by the exact same argument as in the proof of Lemma \ref{measurable} 
the function 
$$D(\xi, \eta) := \sup_{\gamma \in P(\xi, \eta)} d(e, \gamma)$$
is measurable and almost everywhere finite. The sublinear tracking property then follows by Theorem \ref{mainabs}.
Finally, the group generated by the support of $\mu$ is non-amenable, since it does not fix a finite set of ends of the tree; 
thus, the Poisson boundary of the walk is non-trivial (see e.g. \cite{KaiVer}), hence by the entropy criterion (Theorem \ref{entrocrit}) one has $A >0$ 
as claimed.
 \end{proof}





\end{document}